\def\r{\mathbb{R}}
 \def\s{\mathbb{S}} 
\def\d{\mathsf{D}}
\def\a{\mathsf{a}}
\theoremstyle{plain}
\newtheorem{theorem}{Theorem}[section]
\newtheorem{lemma}[theorem]{Lemma}
\newtheorem{corollary}[theorem]{Corollary}
\theoremstyle{definition}
\theoremstyle{remark}
\newtheorem{remark}[theorem]{Remark}
\begin{document}

\title[Stability of graphs in the Euclidean space with density]
{Stability of vertical and radial graphs in the Euclidean space with density}

\author[R. L\'opez]{Rafael L\'opez}
\address{Rafael L\'opez \\ Department of Geometry and Topology \\ University of Granada \\ 18071 Granada, Spain}
\email{rcamino@ugr.es}

\subjclass[2020]{Primary 53C42, 53C21,  49Q20}
\keywords{Density, graphs, stability, calibrations, minimizers}

\begin{abstract}
   It is proved that vertical graphs   and radial graphs are strongly stable for a certain type of densities in Euclidean space $\r^{n+1}$. Particular cases of these densities include translators,   expanders and singular minimal hypersurfaces. Using techniques of calibrations, it is also proved that for densities depending on a spatial coordinate, stationary vertical graphs are weighted minimizers   in a certain class of hypersurfaces.
\end{abstract}

\maketitle

\section{Introduction  and preliminaries}
 
 In this paper we will investigate  under what conditions vertical graphs (graphs on hyperplanes) and radial graphs (graphs on spheres) are stable and minimizers  in   the Euclidean space $\r^{n+1}$ endowed with a density $e^\phi$, where $\phi\in C^\infty(\r^{n+1})$. Consider in $ \r^{n+1}$ the Euclidean metric $\langle,\rangle$, and let  $dA$ and $dV$  denote the  Euclidean area and volume forms, respectively. The weighted area (perimeter) and  volume for the density $e^\phi$  are defined by 
$$ d A_\phi= e^{\phi} dA,\quad \, d V_\phi = e^{\phi}  dV,$$
respectively   \cite{gr,mo}.    Stable hypersurfaces are minimizers of the weighted area functional up to second order and thus, they are candidates to be global minimizers of the weighted area.   

Let $\Sigma$ be a smooth, oriented hypersurface immersed in $\r^{n+1}$, and let $N$ and $H$ denote the Gauss map and the mean curvature of $\Sigma$, respectively. Let $\{\Sigma_t:|t|<\epsilon\}$ be a variation of $\Sigma$, $\Sigma_0=\Sigma$,  with compactly supported variational vector field $\xi$. Let $A_\phi(t)$ and $V_\phi(t)$ be the weighted area and the weighted volume of $\Sigma_t$. The   formulas of the first variation of  $A_\phi$ and   $V_\phi$ are
\begin{equation}\label{aa}
A'_\phi(0)=-\int_\Sigma u\left(nH-\langle\overline{\nabla}\phi,N\rangle\right)  \, d A_\phi,\quad \,  V'_\phi(0) = \int_\Sigma u\,  dA_\phi,
 \end{equation}
where  $\overline{\nabla}$ is the Euclidean gradient in $\r^{n+1}$ and $u=\langle\xi,N\rangle\in C_0^\infty(\Sigma)$ is the normal component of $\xi$. The function $H_\phi$  defined by
\begin{equation}\label{h}
H_\phi=nH-\langle  \overline{\nabla}\phi,N \rangle
\end{equation}
is called the {\it weighted mean curvature} of $\Sigma$. As a consequence,  $\Sigma$ is a critical point of $A_\phi$ for all weighted volume preserving variations if and only if $H_\phi=\lambda$ is constant, $\lambda\in\r$. We  then say that $\Sigma$ is a {\it $\phi$-stationary hypersurface}.   A special case is $\lambda=0$. Then     $H_\phi=0$ if and only if $\Sigma$ is a critical point of the $A_\phi$ for any variation of $\Sigma$ and we say that $\Sigma$ is a {\it $\phi$-minimal hypersurface}.   

In this paper, we investigate  two types of densities. 
  
\begin{enumerate}
\item Radial densities. The function $\phi$ only depends on the distance to the origin $O$ of $\r^{n+1}$. If $r=|x|^2$, $x=(x_1,\ldots,x_{n+1})\in\r^{n+1}$, then $\phi=\phi(r)$. Since $\overline{\nabla}\phi=2\phi'(r)x$,   the weighted mean curvature is given by 
$$H_\phi=nH-2\phi'(r)\langle N,x\rangle.$$
 Interesting cases are self-shrinkers and expanders, where   $\phi(x)=\epsilon |x|^2/4$, with $\epsilon=1$ (expander) and $\epsilon=-1$ (self-shrinker). Then
$$H_\phi=nH-\frac{\epsilon}{2}\langle N,x\rangle.$$
Both  hypersurfaces are particular solutions of the mean curvature flow in $\r^{n+1}$ which   evolve by  expanding or shrinking homothetically \cite{hui1,hui2}. Also constant mean curvature  hypersurfaces (cmc to abbreviate) appear when $\phi=1$. 

Very recently, Dierkes and Huisken have studied stability for a large family of radial densities $\phi=\phi(r)$, including of type $\phi(x)=|x|^p$, with $p\in\r$ \cite{dh2}.   
\item Densities depending on a spatial coordinate.  The function $\phi$ is $\phi(x)=\phi(x_{n+1})$. If  $\{\a_i:1\leq i\leq n+1\}$ is the canonical basis of $\r^{n+1}$, then $\overline{\nabla}\phi=\phi'(t)\a_{n+1}$. Thus
$$H_\phi=nH-\phi'(t)\langle N,\a_{n+1}\rangle=nH-\phi'(t)N_{n+1},$$
 where    $N=(N_1,\ldots,N_{n+1})$. If $\phi=1$, then $\phi$-stationary hypersurfaces are cmc hypersurfaces. Another two examples of this type of densities are the following:
\begin{enumerate}
\item 
$\lambda$-translators.  Here $\phi(t)=t$.  When $\lambda=0$, $0$-translators are the translating solitons   of the mean curvature flow in $\r^{n+1}$ which evolve by translations \cite{il}. The weighted mean curvature is   
$$H_\phi=nH-N_{n+1}.$$
\item Singular minimal hypersurfaces. Here $\phi(t)=\alpha\log(t)$,  where $\alpha\in\r$ and it is assumed  that $\Sigma$ is included in the halfspace $\r^{n+1}_{+}=\{x\in\r^{n+1}:x_{n+1}>0\}$. The weighted mean curvature is  
$$H_\phi=nH-  \alpha \frac{N_{n+1}}{x_{n+1}}.$$
A $\phi$-stationary hypersurface is called $\alpha$-singular minimal hypersurface. If  $\alpha=1$, then $\phi$-minimal hypersurfaces   correspond  with the $n$-dimensional analogue of the catenary  \cite{bht,di1,di2,dh}. If $\alpha=-n$, then $\phi$-minimal hypersurfaces  are minimal hypersurfaces of the hyperbolic space when $\r^{n+1}_{+}$ is endowed with the hyperbolic metric.  
 \end{enumerate}
\end{enumerate}
 
A $\phi$-stationary hypersurface $\Sigma$ is said to be  {\it strongly $\phi$-stable} (resp.  {\it $\phi$-stable}) if  $A''_\phi(0)\geq 0$ for all variations of $\Sigma$ (resp. all weighted volume preserving variations).   
    The expression of the second derivative of $A_\phi(t)$   for compactly supported normal variations of $\Sigma$ is 
\begin{equation}\label{ss}
A_\phi ''(0)=-\int_\Sigma u \left(\Delta u+\langle\overline{\nabla}\phi,\nabla u\rangle+(|A|^{2}-\overline{\nabla}^2\phi(N,N)\big)u\right)\, d A_\phi,
\end{equation}
where $u=\langle\xi,N\rangle\in C_0^\infty(\Sigma)$. See   \cite{ba,cr2}. Here $|A|^2$ denotes the square of the norm of the second fundamental form $A$ of   $\Sigma$,   $\nabla$ and $\Delta$ are the gradient and the Laplacian operators computed on $\Sigma$ with the induced Euclidean metric  and $\overline{\nabla}^2$ is the Hessian operator in $ \r^{n+1}$.  In the case that the admissible variations preserve the weighted volume of $\Sigma$,  the function $u$ satisfies the extra condition  $\int_\Sigma u\, d A_\phi=0$.   The term $-\overline{\nabla}^2\phi$ in \eqref{ss} coincides with the Bakry-\'Emery Ricci tensor.  Results of $\phi$-stability for different conditions on the Bakry-\'Emery Ricci tensor  appear in  \cite{al,ca2,cr,ch,ir,li}. In the particular case of translating solitons and self-shrinkers, see \cite{cl,im,irs,sh}.

In Section \ref{sec2} we will derive $\phi$-stability of graphs for radial densities and densities depending on a spatial coordinate.   Here we distinguish two types of graphs: vertical graphs, which are graphs on domains of a hyperplane $\Pi$ of $\r^{n+1}$, and  radial graphs on domains of  the unit sphere $\s^n$ centered  at $O\in\r^{n+1}$. 

For radial densities $e^\phi$, $\phi=\phi(r)$, we prove in Theorem \ref{t1} that $\phi$-stationary vertical graphs are strongly $\phi$-stable if the Bakry-\'Emery Ricci tensor is nonpositive. This hypothesis is equivalent to $2\phi'(r)+\phi''(r)\langle N,x\rangle^2\geq 0$ on the graph. In Theorem \ref{t2}, it is proved  that   radial graphs are strong $\phi$-stable if $\phi'(r)+\phi''(r)\langle N,x\rangle^2\geq 0$. In particular,  vertical graph expanders and   radial graph expanders are strongly $\phi$-stable (Corollaries \ref{ce1} and \ref{ce2}, respectively). 

For densities $e^\phi$ depending on a spatial coordinate, $\phi=\phi(x_{n+1})$, strong $\phi$-stability for vertical graphs is assured if $\phi$ is convex (Theorem \ref{t4}). In contrast, strong $\phi$-stability for radial graphs is difficult to study in general. In the particular case of $\alpha$-singular minimal surfaces, we are able to prove strong  $\phi$-stability of radial graphs if $\alpha\leq 0$ (Theorem \ref{t5}). 

In Section \ref{sec3}, we consider the problem of global minimization of the weighted area $A_\phi$ when the density  depends only on a spatial coordinate. If $\phi=\phi(x_{n+1})$ is convex, then it is proved that vertical graphs on a domain $U$ of the hyperplane $x_{n+1}=0$ are minimizers in the class of hypersurfaces in $U\times\r$ with the same boundary and the same homology class (Theorem \ref{estable1}).

\section{Stability of vertical graphs and radial graphs}\label{sec2}

 We express the notion of $\phi$-stability in terms of a quadratic form defined in the space of compactly supported functions $C_0^\infty(\Sigma)$. Consider the expression of $A_\phi''(0)$ given in \eqref{ss}. The $\phi$-divergence of a vector field $X\in\mathfrak{X}(\Sigma)$ is  defined by $\mathrm{div}_\phi X=\mathrm{div}X+\langle\overline{\nabla}\phi,X\rangle$. Thus the $\phi$-Laplacian of $u  \in C^\infty(\Sigma)$   is defined by 
\begin{equation}\label{div}
\Delta_\phi u=\mathrm{div}_\phi(\nabla u)=\Delta u+\langle \overline{\nabla}\phi,\nabla u\rangle.
\end{equation}
The parenthesis in \eqref{ss} defines an elliptic operator called the {\it $\phi$-Jacobi operator} $L_\phi$, $L_\phi\colon C_0^\infty(\Sigma)\to C_0^\infty(\Sigma)$, which is given by 
$$L_\phi [u] = \Delta_\phi u+(|A|^{2}-\overline{\nabla}^2\phi(N,N)\big)u.$$ 
 Both operators $\Delta_\phi$ and  $L_\phi$ are not self-adjoint with respect to the $L^2$-inner product, but they are with respect to the weighted inner product $\int_\Sigma uv\, dA_\phi$ for $u,v\in C_0^\infty(\Sigma)$. Therefore, the expression of $A_\phi''(0)$ in \eqref{ss} defines  the quadratic form  
\begin{equation}\label{qq}
Q_\phi[u]=-\int_\Sigma u\cdot L_\phi[u]\, d A_\phi,\quad u\in C^\infty_0(\Sigma).
\end{equation}
This implies that  $\Sigma$ is strongly $\phi$-stable if and only if $Q_\phi[u]\geq 0$ for all $u\in C_0^\infty(\Sigma)$. 

In this section, we will  establish conditions that ensure that  vertical graphs  and radial graphs  of $\r^{n+1}$ are strongly $\phi$-stable for radial  densities and densities depending on a spatial coordinate. If $\Pi$ is  a hyperplane with $\Pi=\a^\top$, $\lvert \a\rvert=1$, a  vertical graph $\Sigma\subset\r^{n+1}$ on a domain of $\Pi$ satisfies that the function $\langle N,\a\rangle$ does not vanish on $\Sigma$. Analogously, on a radial graph $\Sigma$ on a domain of $\s^n$,  the support function $ \langle N,x\rangle$ does not vanish. As a  consequence, for vertical graphs (resp. radial graphs) any straight-line orthogonal to $\Pi$ (resp. straight-line  starting from $O$) meets the hypersurface once at most.

We begin with the study of vertical graphs. First, we compute the $\phi$-Laplacian of the function $\langle N,\a\rangle$. 

\begin{lemma} Let $\a\in\r^{n+1}$, $\lvert \a\rvert=1$. If $\Sigma$ is a $\phi$-stationary surface, then the function $g=\langle N,\a\rangle$ satisfies
\begin{equation}\label{n1}
\Delta_\phi g+|A|^2g=-\sum_{i=1}^n\langle\d_{e_i} \overline{\nabla}\phi,N\rangle \langle e_i,\a\rangle,
\end{equation}
where $\d$ is the Levi-Civita connection in $\r^{n+1}$ and $\{e_i:1\leq i\leq n\}$ is a local  orthonormal frame on  $\Sigma$.
\end{lemma}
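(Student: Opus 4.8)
The plan is to first establish the classical Jacobi-type identity for the Gauss-map component $g=\langle N,\a\rangle$ of a hypersurface in flat $\r^{n+1}$, and then absorb the density by re-expressing the mean-curvature gradient through the $\phi$-stationarity hypothesis. I would work at a fixed point $p\in\Sigma$ with a geodesic orthonormal frame $\{e_i\}$, chosen so that $\nabla_{e_i}e_j=0$ at $p$; this kills the tangential connection terms and reduces everything to the second fundamental form $h_{ij}=\langle\d_{e_i}e_j,N\rangle$ and its derivatives, with the Weingarten formula $\d_{e_i}N=-\sum_j h_{ij}e_j$ and the normalization $nH=\sum_i h_{ii}$.

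First I would compute the tangential gradient of $g$. Since $\a$ is a constant vector field, $e_i(g)=\langle\d_{e_i}N,\a\rangle=-\sum_j h_{ij}\langle e_j,\a\rangle$. Differentiating once more and using $\d_{e_i}e_j=h_{ij}N$ at $p$, the Laplacian $\Delta g$ splits into a derivative-of-curvature term $-\sum_{i,j}e_i(h_{ij})\langle e_j,\a\rangle$ and a term $-\sum_{i,j}h_{ij}^2\,g=-|A|^2 g$. The Codazzi equation in the flat ambient space makes $\nabla h$ totally symmetric, so $\sum_i e_i(h_{ij})=n\,e_j(H)$, which yields the classical identity $\Delta g+|A|^2 g=-n\langle\nabla H,\a\rangle$, valid for any hypersurface.

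Next I would pass to the $\phi$-Laplacian by adding $\langle\overline{\nabla}\phi,\nabla g\rangle$, and then invoke the hypothesis. Being $\phi$-stationary means $H_\phi=nH-\langle\overline{\nabla}\phi,N\rangle$ is constant, so $n\nabla H=\nabla\langle\overline{\nabla}\phi,N\rangle$. Expanding this gradient along $e_j$ by the product rule produces exactly two contributions: a Hessian term $\langle\d_{e_j}\overline{\nabla}\phi,N\rangle$, and a shape term $-\sum_k h_{jk}\langle\overline{\nabla}\phi,e_k\rangle$ coming from $\d_{e_j}N=-\sum_k h_{jk}e_k$.

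The decisive step, and the one I expect to require the most care, is the cancellation. Computing $\langle\overline{\nabla}\phi,\nabla g\rangle$ explicitly from the formula for $\nabla g$ above gives $-\sum_{j,k}h_{jk}\langle e_j,\a\rangle\langle\overline{\nabla}\phi,e_k\rangle$, which, by the symmetry $h_{jk}=h_{kj}$, is precisely the negative of the shape term contracted against $\a$. These two pieces annihilate each other, leaving only $-\sum_i\langle\d_{e_i}\overline{\nabla}\phi,N\rangle\langle e_i,\a\rangle$, which is the asserted right-hand side of \eqref{n1}. This cancellation is exactly why the weighted operator $\Delta_\phi$, rather than $\Delta$, is the natural one here: the first-order drift term is tailored to absorb the Weingarten contribution generated by differentiating the Gauss map inside $\langle\overline{\nabla}\phi,N\rangle$. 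The only real hazard is bookkeeping, keeping the index contractions and the sign conventions for $h_{ij}$ consistent across the Codazzi step and the product-rule expansion.
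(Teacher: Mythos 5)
Your proposal is correct and follows essentially the same route as the paper: expand $n\nabla H=\nabla\langle\overline{\nabla}\phi,N\rangle$ via $\phi$-stationarity into a Hessian term plus a Weingarten term, and cancel the latter against the drift $\langle\overline{\nabla}\phi,\nabla g\rangle$ using the symmetry of the shape operator. The only cosmetic difference is that you rederive the classical identity $\Delta g+|A|^2g=-n\langle\nabla H,\a\rangle$ from Codazzi in a geodesic frame, whereas the paper simply cites its vector-valued form $\Delta N+|A|^2N+\nabla(nH)=0$.
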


\begin{proof}
It is known that in any hypersurface, the Gauss map $N$ satisfies 
\begin{equation}\label{no}
\Delta N+|A|^2N+\nabla(nH)=0.
\end{equation}
 Since $H_\phi$ is constant, $H_\phi=\lambda$, from \eqref{h} we have $nH=\langle\overline{\nabla}\phi,N\rangle+\lambda$. Thus
\begin{equation*}
\nabla (nH) =\nabla\langle \overline{\nabla}\phi,N\rangle=\sum_{i} \langle\nabla \langle\overline{\nabla}\phi,N\rangle,e_i\rangle e_i=\sum_i\left(\langle\d_{e_i} \overline{\nabla}\phi,N\rangle+\langle\overline{\nabla}\phi,\d_{e_i}N\rangle\right)  e_i . 
\end{equation*}
With this identity, equation \eqref{no} is written as 
\begin{equation*} 
\Delta N+|A|^2N+ \sum_i \langle\overline{\nabla}\phi,\d_{e_i}N\rangle     e_i  =-\sum_{i} \langle\d_{e_i} \overline{\nabla}\phi,N\rangle e_i.
\end{equation*}
Multiplying by $\a$, we obtain 
\begin{equation}\label{c11}
\Delta g+|A|^2g +\sum_i \langle\overline{\nabla}\phi,\d_{e_i}N\rangle   \langle e_i,\a\rangle=-\sum_{i} \langle\d_{e_i} \overline{\nabla}\phi,N\rangle \langle e_i,\a\rangle.
\end{equation}
On the other hand, we compute the term $\langle\overline{\nabla}\phi,\nabla g\rangle$ in \eqref{div}.  Since $\d_{e_i}N$ is tangent to $\Sigma$,   we have
$$\langle \nabla g,e_i\rangle=\langle\d_{e_i}N,\a\rangle=\sum_j\langle \d_{e_i}N,\a_j\rangle\langle e_j,\a\rangle.$$
Using that the Weingarten endomorphism is self-adjoint, then
\begin{equation}\label{c12}
\begin{split}
\langle\overline{\nabla}\phi,\nabla g\rangle&=\sum_{i} \langle \overline{\nabla}\phi,e_i\rangle\langle \nabla g ,e_i\rangle=\sum_{i,j}   \langle \overline{\nabla}\phi,e_i\rangle\langle\d_{e_i} N,e_j\rangle\langle e_j ,\a\rangle\\
&=\sum_{i,j}  \langle \overline{\nabla}\phi,e_i\rangle\langle e_i,\d_{e_j} N\rangle\langle e_j ,\a\rangle=
\sum_{i}   \langle \overline{\nabla}\phi, \d_{e_i} N\rangle\langle e_i ,\a\rangle.
\end{split}
\end{equation}
Combining   identities \eqref{c11} and \eqref{c12}, we obtain \eqref{n1}.
\end{proof}
Let's introduce the notation
$$h=\langle N,x\rangle$$
for  the support function.  The first result  establishes conditions for a vertical graph to be strongly $\phi$-stable. As it was announced, we will investigate two types of densities. We begin with the study of radial densities.

\begin{theorem} \label{t1}
Let $e^\phi$ be a radial density, with $\phi=\phi(r)$,    $r=|x|^2$. Let $\Sigma$ a  $\phi$-stationary vertical graph. If $\phi'(r)+2 \phi''(r)h^2\geq 0$ on $\Sigma$, then   $\Sigma$ is  strongly $\phi$-stable.  
\end{theorem}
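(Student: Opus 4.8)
The plan is to verify the stability criterion recorded above, namely that $Q_\phi[u]\ge 0$ for every $u\in C_0^\infty(\Sigma)$, by exploiting the distinguished positive function attached to a vertical graph. Since $\Sigma$ is a vertical graph over a domain of the hyperplane orthogonal to the unit vector $\a$, the function $g=\langle N,\a\rangle$ has no zeros; after fixing the orientation I may assume $g>0$ on $\Sigma$. The preceding Lemma already records the behaviour of $g$ under the weighted Jacobi operator, so $g$ will serve as a (generalized) Jacobi field, and the whole argument rests on feeding $g$ into the quadratic form $Q_\phi$.

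First I would perform the classical substitution $u=gw$ with $w\in C_0^\infty(\Sigma)$, which is legitimate because $g$ is bounded away from zero on the compact support of $u$, so $w=u/g$ is again a compactly supported smooth function. Using the product rule for $\Delta_\phi$ together with the self-adjointness of $\Delta_\phi$ with respect to the weighted product $\int_\Sigma\,\cdot\,dA_\phi$, an integration by parts makes the first-order cross terms cancel and yields
\begin{equation*}
Q_\phi[u]=\int_\Sigma g^2\,|\nabla w|^2\,dA_\phi-\int_\Sigma g\,w^2\,L_\phi[g]\,dA_\phi .
\end{equation*}
The first integrand is manifestly nonnegative, so the entire question reduces to the pointwise sign of $-g\,L_\phi[g]$: if $-g\,L_\phi[g]\ge 0$ on $\Sigma$, then $Q_\phi[u]\ge 0$ for all admissible $u$ and $\Sigma$ is strongly $\phi$-stable.

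The remaining step is to compute $L_\phi[g]=(\Delta_\phi g+|A|^2g)-\overline{\nabla}^2\phi(N,N)\,g$ for the radial density. From $\overline{\nabla}\phi=2\phi'(r)\,x$ one finds $\d_{e_i}\overline{\nabla}\phi=4\phi''(r)\langle x,e_i\rangle\,x+2\phi'(r)\,e_i$, whence $\langle\d_{e_i}\overline{\nabla}\phi,N\rangle=4\phi''(r)\langle x,e_i\rangle\,h$ and $\overline{\nabla}^2\phi(N,N)=2\phi'(r)+4\phi''(r)h^2$. Substituting the first expression into the right-hand side of the Lemma evaluates $\Delta_\phi g+|A|^2g$; the $h^2$-contribution it produces then cancels exactly against the $h^2$-part of $\overline{\nabla}^2\phi(N,N)$, so that $-g\,L_\phi[g]$ collapses to $2\phi'(r)\,g^2$ plus a cross term proportional to $\phi''(r)\,h\,g\,\langle x,\a\rangle$, that is, one carrying the height $\langle x,\a\rangle$ of the graph. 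I expect this cross term to be the main obstacle: it is precisely the trace of the fact that a radial density is not invariant under the translations that define a vertical graph, and it has no analogue in the classical minimal-graph case where $g$ is a genuine Jacobi field. The crux is therefore to show that the hypothesis $\phi'(r)+2\phi''(r)h^2\ge 0$ nonetheless forces $-g\,L_\phi[g]\ge 0$; this sign bookkeeping is the delicate point, and it trivializes in the expander case $\phi''\equiv 0$, where the cross term disappears and $-g\,L_\phi[g]=2\phi'(r)\,g^2\ge 0$ at once, recovering Corollary \ref{ce1}.
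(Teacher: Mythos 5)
Your reduction is exactly the paper's: the substitution $u=gw$ with $g=N_{n+1}$, the integration by parts via the $\phi$-divergence theorem, and the resulting identity
\begin{equation*}
Q_\phi[u]=\int_\Sigma g^2\lvert\nabla w\rvert^2\,dA_\phi-\int_\Sigma g\,w^2\,L_\phi[g]\,dA_\phi
\end{equation*}
reproduce the computation \eqref{q1}--\eqref{l5} verbatim, so up to that point you are on the paper's track. The genuine gap is that your proposal then stops: having reduced everything to the pointwise sign of $-g\,L_\phi[g]$, you correctly arrive at $-g\,L_\phi[g]=2\phi'(r)g^2+4\phi''(r)h\,g\,x_{n+1}$, flag the cross term as ``the delicate point,'' and never resolve it --- but that resolution is the entire content of the theorem. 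Moreover it cannot be extracted from the stated hypothesis by this route: rewriting
$-g\,L_\phi[g]=2\bigl(\phi'(r)+2\phi''(r)h^2\bigr)g^2+4\phi''(r)h\,g\,(x_{n+1}-hg)$,
the hypothesis $\phi'+2\phi''h^2\geq0$ controls only the first summand, while $x_{n+1}-hg=\langle x-hN,\a_{n+1}\rangle$ is the tangential part of the position vector in the graph direction, whose sign and size are unconstrained on a general $\phi$-stationary vertical graph. As written, your argument therefore proves the statement only when the cross term vanishes, essentially $\phi''\equiv0$, which recovers Corollary \ref{ce1} but not Theorem \ref{t1}.

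You should know, however, that your bookkeeping is more careful than the paper's own proof at precisely this point. The paper discards the cross term by asserting $\langle\d_{e_i}\overline{\nabla}\phi,N\rangle=2\phi'(r)\langle e_i,N\rangle=0$, which differentiates $\overline{\nabla}\phi=2\phi'(r)x$ as if $\phi'(r)$ were constant along $\Sigma$; the omitted piece is exactly your $4\phi''(r)\langle x,e_i\rangle h=\overline{\nabla}^2\phi(e_i,N)$, and retaining it is forced by consistency with the paper's own formula \eqref{he}, where the $\phi''$-contribution to the normal--normal Hessian component is kept. Consequently the key identity \eqref{lu}, $L_\phi[N_{n+1}]=-\overline{\nabla}^2\phi(N,N)N_{n+1}$, on which the paper's proof rests, holds only when the tangential--normal Hessian component vanishes (e.g.\ for $\phi''\equiv0$, as for expanders and self-shrinkers). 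So your proposal does not merely leave a hole: it makes explicit a term that the paper's argument silently drops, and a complete proof of Theorem \ref{t1} along either route would have to handle $4\phi''(r)h\,N_{n+1}(x_{n+1}-hN_{n+1})$ --- for instance through an additional sign hypothesis on it --- or replace $N_{n+1}$ by a different positive function $\rho$ with $L_\phi[\rho]\leq0$.
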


\begin{proof}
Let $\Sigma$ be a $\phi$-stationary graph on a domain of a hyperplane $\Pi$.  After a change of coordinates, we can assume that $\Pi$ is the plane $x_{n+1}=0$. Since $\phi=\phi(r)$, we have $\overline{\nabla}\phi=2\phi'(r) x$   and 
\begin{equation}\label{he}
\overline{\nabla}^2\phi(N,N)=2(\phi'(r)+2\phi''(r) \langle N,x\rangle^2)=2(\phi'(r)+2\phi''(r) h^2).
\end{equation}
We know that the function $ N_{n+1}=\langle N,\a_{n+1}\rangle$ has sign on $\Sigma$.    Since $\langle \d_{e_i}\overline{\nabla}\phi,N\rangle=2\phi'(r)\langle e_i,N\rangle=0$,  the right hand-side of  \eqref{n1} vanishes and thus
\begin{equation}\label{lu}
L_\phi[N_{n+1}]=-\overline{\nabla}^2\phi(N,N)N_{n+1}= -2(\phi'(r)+2\phi''(r) h^2)N_{n+1}.
\end{equation}
 Let  $u\in C_0^\infty(\Sigma)$ be arbitrary and we    prove that $Q_\phi[u]\geq 0$.   Let $v=u/N_{n+1}\in C_0^\infty(\Sigma)$. For $u=vN_{n+1}$, we have
\begin{equation*}
\begin{split}
\Delta u&=v\Delta N_{n+1}+N_{n+1}\Delta v+2\langle\nabla v,\nabla N_{n+1}\rangle,\\
\langle\overline{\nabla}\phi,\nabla N_{n+1}\rangle&=N_{n+1}\langle \overline{\nabla}\phi,\nabla v \rangle+v\langle \overline{\nabla}\phi,\nabla N_{n+1} \rangle.
\end{split}
\end{equation*}
Then
\begin{equation}\label{l4}
\begin{split}
L_\phi[u]&=v\cdot L_\phi[N_{n+1}]+N_{n+1}\left(\Delta v +\langle \overline{\nabla}\phi,\nabla v\rangle\right)+    2\langle\nabla v,\nabla N_{n+1}\rangle \\
&=v\cdot L_\phi[N_{n+1}]+N_{n+1}\Delta_\phi v+2\langle\nabla v,\nabla N_{n+1}\rangle.
\end{split}
\end{equation}
Therefore
\begin{equation}  \label{q1}
\begin{split}
Q_\phi[u]&=-\int_\Sigma u\cdot L_\phi[u]\, dA_\phi=-\int_\Sigma v N_{n+1}\cdot L_\phi[u]\, dA_\phi\\
&=-\int_\Sigma\left( v^2  N_{n+1}\cdot L_\phi[N_{n+1}]+vN_{n+1}^2\Delta_\phi v+2v N_{n+1}  \langle\nabla v,\nabla N_{n+1}\rangle\right) \, d A_\phi.
\end{split}
\end{equation}
For the vector field $v N_{n+1}^2\nabla v$, we have   
$$\mathrm{div}_\phi(v N_{n+1}^2\nabla v)=vN_{n+1}^2\Delta_\phi v+N_{n+1}^2\lvert \nabla v\rvert^2+2vN_{n+1}  \langle\nabla N_{n+1},\nabla v\rangle.$$
Using the $\phi$-divergence theorem   in weighted manifolds  and because $v$ is compactly supported in $\Sigma$, 
$$\int_\Sigma (vN_{n+1}^2\Delta_\phi v +2vN_{n+1} \langle\nabla v,\nabla N_{n+1}\rangle )\, d A_\phi=-\int_\Sigma N_{n+1}^2\lvert\nabla v\rvert^2\, d A_\phi.$$
Putting into \eqref{q1} and taking into account \eqref{lu}, we find
\begin{equation}\label{l5}
\begin{split}
Q_\phi[u]&=\int_\Sigma N_{n+1}^2\lvert\nabla v\rvert^2\, d A_\phi-\int_\Sigma v^2  N_{n+1} \cdot L_\phi[N_{n+1}] \, d A_\phi\\
&=\int_\Sigma N_{n+1}^2\lvert\nabla v\rvert^2\, d A_\phi+2\int_\Sigma v^2N_{n+1}^2(\phi'(r)+2\phi''(r) h^2) \, d A_\phi\geq 0,
\end{split}
\end{equation}
as desired.
\end{proof}

\begin{remark}\label{rem1}
Monotonically increasing convex functions   $\phi$    satisfy  $\phi'(r)+2\phi''(r)h^2\geq 0$.
\end{remark}

Let us observe that for self-shrinkers and expanders, $\phi(r)=\epsilon r/4$. In particular, $ \phi'(r)+2\phi''(r)h^2=\epsilon/2$.   Consequently, we have proved the following result. 

\begin{corollary}\label{ce1} Vertical graph expanders are strongly $\phi$-stable.  
\end{corollary}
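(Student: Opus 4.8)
The plan is to obtain this statement as an immediate specialization of Theorem \ref{t1}, so that no new machinery is required. Recall from the introduction that an expander is precisely the $\phi$-stationary condition for the radial density $\phi(r)=\epsilon r/4$ with $\epsilon=1$ and $r=|x|^2$. The entire proof therefore reduces to verifying that this particular $\phi$ satisfies the hypothesis of Theorem \ref{t1}, namely $\phi'(r)+2\phi''(r)h^2\geq 0$ on $\Sigma$, where $h=\langle N,x\rangle$.

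First I would compute the two derivatives: for $\phi(r)=r/4$ one has $\phi'(r)=1/4$ and $\phi''(r)=0$. Substituting into the hypothesis gives $\phi'(r)+2\phi''(r)h^2=1/4$, a strictly positive constant that does not depend on the point of $\Sigma$ nor on the value of the support function $h$. In particular the required inequality holds everywhere on $\Sigma$, with room to spare. Equivalently, by \eqref{he} the Bakry-\'Emery term is $\overline{\nabla}^2\phi(N,N)=2(\phi'(r)+2\phi''(r)h^2)=1/2$, so the Bakry-\'Emery Ricci tensor $-\overline{\nabla}^2\phi$ is nonpositive, exactly the situation flagged in the introduction for Theorem \ref{t1}.

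Then, since a vertical graph expander is by definition a $\phi$-stationary vertical graph for this density, Theorem \ref{t1} applies verbatim and yields $Q_\phi[u]\geq 0$ for every $u\in C_0^\infty(\Sigma)$; that is, $\Sigma$ is strongly $\phi$-stable, which is the assertion.

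I do not expect any genuine obstacle here: the whole content is the constancy of the Bakry-\'Emery tensor for this Gaussian-type density, which trivializes the sign condition and makes the two integrals in \eqref{l5} manifestly nonnegative. The only conceptual point worth flagging is why the corollary singles out expanders rather than self-shrinkers. For the shrinker $\epsilon=-1$ one obtains $\phi'(r)+2\phi''(r)h^2=-1/4<0$, so the sign argument of Theorem \ref{t1} gives no information, and establishing stability in that case would require a different route not supplied by this specialization.
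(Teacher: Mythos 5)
Your proof is correct and takes essentially the same route as the paper, which likewise deduces the corollary by specializing Theorem \ref{t1} to $\phi(r)=\epsilon r/4$ and checking the sign condition (which fails for the shrinker $\epsilon=-1$, exactly as you note). A minor point in your favor: your value $\phi'(r)+2\phi''(r)h^2=1/4$ is the correct one, while the paper's stated $\epsilon/2$ is actually the Hessian $\overline{\nabla}^2\phi(N,N)$ from \eqref{he} --- a harmless slip that does not affect the sign argument.
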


We now study $\phi$-stability of radial graphs. We know that the support function  $h=\langle N,x\rangle$  does not vanish on a radial graph. First we  compute the $\phi$-Jacobian operator of   $h$. 

\begin{lemma} If $\Sigma$ is a $\phi$-stationary surface, then the support function $h$ satisfies
\begin{equation}\label{h3}
\Delta_\phi h+\lvert A\rvert^2h=-nH-\sum_{i=1}^n  \langle\d_{e_i} \overline{\nabla}\phi,N\rangle \langle e_i,x\rangle,
\end{equation}
where $\{e_i:1\leq i\leq n\}$ is a local orthonormal frame on $\Sigma$.
\end{lemma}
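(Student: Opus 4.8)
The plan is to follow the same route as in the proof of the preceding lemma (identity \eqref{n1}), the only--- but essential--- difference being that the position vector $x$ is not parallel in $\r^{n+1}$. I would start from the universal identity \eqref{no}, $\Delta N+|A|^2N+\nabla(nH)=0$, and use that $\Sigma$ is $\phi$-stationary to rewrite $\nabla(nH)=\nabla\langle\overline{\nabla}\phi,N\rangle$ exactly as in the step producing \eqref{c11}, obtaining
\[
\Delta N+|A|^2N+\sum_i\langle\overline{\nabla}\phi,\d_{e_i}N\rangle e_i=-\sum_i\langle\d_{e_i}\overline{\nabla}\phi,N\rangle e_i.
\]
Taking now the inner product with $x$ (instead of with the constant vector $\a$) and writing $h=\langle N,x\rangle$ produces
\[
\langle\Delta N,x\rangle+|A|^2h+\sum_i\langle\overline{\nabla}\phi,\d_{e_i}N\rangle\langle e_i,x\rangle=-\sum_i\langle\d_{e_i}\overline{\nabla}\phi,N\rangle\langle e_i,x\rangle.
\]

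The heart of the argument, and the place where the extra term $-nH$ is born, is the relation between $\langle\Delta N,x\rangle$ and $\Delta h$. Because $\d_{e_i}x=e_i$ and $\langle N,e_i\rangle=0$, I would first compute $\nabla_{e_i}h=\langle\d_{e_i}N,x\rangle$, and then differentiate once more using a frame geodesic at the point to get
\[
\Delta h=\langle\Delta N,x\rangle+\sum_i\langle\d_{e_i}N,e_i\rangle=\langle\Delta N,x\rangle-nH,
\]
where $\sum_i\langle\d_{e_i}N,e_i\rangle=-\mathrm{tr}(A)=-nH$ by the sign convention that makes \eqref{no} hold. Thus $\langle\Delta N,x\rangle=\Delta h+nH$, which is precisely the contribution absent from the constant-vector computation \eqref{n1}.

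For the first-order term I expect no new difficulty: since $\nabla_{e_i}h=\langle\d_{e_i}N,x\rangle$ has exactly the same shape as $\nabla_{e_i}g=\langle\d_{e_i}N,\a\rangle$ in the previous lemma, the self-adjointness of the Weingarten endomorphism used in \eqref{c12} applies verbatim and yields $\langle\overline{\nabla}\phi,\nabla h\rangle=\sum_i\langle\overline{\nabla}\phi,\d_{e_i}N\rangle\langle e_i,x\rangle$. Substituting this and $\langle\Delta N,x\rangle=\Delta h+nH$ into the displayed identity, and recalling $\Delta_\phi h=\Delta h+\langle\overline{\nabla}\phi,\nabla h\rangle$ from \eqref{div}, collapses everything to $\Delta_\phi h+|A|^2h+nH=-\sum_i\langle\d_{e_i}\overline{\nabla}\phi,N\rangle\langle e_i,x\rangle$, which is \eqref{h3}.

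The main obstacle is really just the careful bookkeeping forced by the non-parallel position vector: one must make sure the single extra term $\sum_i\langle\d_{e_i}N,e_i\rangle$ is correctly accounted for (it produces the $-nH$) and that it is not inadvertently counted a second time inside the $\phi$-gradient term. Everything else is a faithful transcription of the previous lemma with $\a$ replaced by $x$.
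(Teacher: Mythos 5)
Your proof is correct, and it reaches \eqref{h3} by a genuinely different (coordinate-free) bookkeeping than the paper's. The paper decomposes $h=\sum_j x_jN_j$ and applies the product rule for $\Delta$: the term $-nH$ then arises as the sum of two separate contributions, $\sum_j N_j\Delta x_j=nH$ (from $\Delta x_j=nHN_j$) and the cross term $2\sum_j\langle\nabla N_j,\nabla x_j\rangle=-2nH$, after which the componentwise identity behind \eqref{n1} is substituted for each $\Delta N_j$; finally the drift term is handled by splitting $\langle\overline{\nabla}\phi,\nabla h\rangle=\sum_j x_j\langle\overline{\nabla}\phi,\nabla N_j\rangle+\sum_j N_j\langle\overline{\nabla}\phi,\nabla x_j\rangle$ and showing the second sum vanishes via $\nabla x_j=\a_j-N_jN$. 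You instead dot the vector identity \eqref{no} (corrected by $\phi$-stationarity) directly with $x$ and compute the commutation defect $\Delta\langle N,x\rangle-\langle\Delta N,x\rangle=\sum_i\langle\d_{e_i}N,e_i\rangle=-nH$ in a geodesic frame, which fuses the paper's two $nH$-producing terms into a single transparent step; your sign convention $\sum_i\langle\d_{e_i}N,e_i\rangle=-nH$ is exactly the one the paper itself uses in its intermediate computation, so the bookkeeping is consistent with both \eqref{no} and $\Delta x_j=nHN_j$. Your treatment of the drift term, transcribing \eqref{c12} with $\a$ replaced by $x$, is exact because $\d_{e_i}N$ is tangent (so only the tangential part of $x$ enters) and the Weingarten symmetry applies verbatim; this is equivalent to, and slightly slicker than, the paper's vanishing lemma for $\sum_j N_j\langle\overline{\nabla}\phi,\nabla x_j\rangle$. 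What your route buys is a cleaner identification of where the extra $-nH$ comes from ($\d_{e_i}x=e_i$, the non-parallelism of the position vector); what the paper's route buys is that it recycles the componentwise formula already established in the previous lemma without any frame-at-a-point argument.
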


\begin{proof} 
Let $x_j=\langle x,\a_j\rangle$ and $N_j=\langle N,\a_j\rangle$. Because $h=\sum_{j=1}^{n+1}x_jN_j$,   we have 
\begin{equation}\label{hs}
 \Delta h=\sum_jN_j\Delta x_j+2\sum_j\langle \nabla N_j,\nabla x_j\rangle+\sum_j x_j\Delta N_j.
 \end{equation}
 Since $\Delta x_j=nH N_j$, then $\sum_jN_j\Delta x_j=nH$. On the other hand,  
 \begin{equation*}
 \begin{split}
 \sum_j\langle \nabla N_j,\nabla x_j\rangle&=-\sum_{i,j}  \langle \nabla N_j,e_i\rangle\langle\nabla x_j,e_i\rangle=
 \sum_{i,j}\langle\d_{e_i}N,\a_j\rangle\langle\a_j,e_i\rangle\\
 &=\sum_i\langle \d_{e_i}N,e_i\rangle=-nH.
 \end{split}
 \end{equation*}
Using this identity and \eqref{n1}, equation \eqref{hs} is
 \begin{equation*}
\begin{split}
\Delta h&=-nH+\sum_j x_j\Delta N_j\\
 &=-nH-\sum_j x_j\left(\lvert A\rvert^2 N_j+\langle \overline{\nabla}\phi,\nabla N_j\rangle+\sum_{i} \langle\d_{e_i} \overline{\nabla}\phi,N\rangle \langle e_i,\a_j\rangle\right)\\
  &=-nH- \lvert A\rvert ^2 h-\sum_j x_j\langle \overline{\nabla}\phi,\nabla N_j\rangle-\sum_{i} \langle\d_{e_i} \overline{\nabla}\phi,N\rangle \langle e_i,x\rangle.
 \end{split}
 \end{equation*}
 Then
 \begin{equation}\label{a0}
 \Delta h+\lvert A\rvert^2h +\sum_j x_j\langle \overline{\nabla}\phi,\nabla N_j\rangle=-nH-\sum_{i} \langle\d_{e_i} \overline{\nabla}\phi,N\rangle \langle e_i,x\rangle.
 \end{equation}
 For the computation of $\Delta_\phi h=\Delta h+\langle \overline{\nabla}\phi,\nabla h\rangle$, we calculate the second term. We have  
 \begin{equation}\label{a1}
\langle \overline{\nabla}\phi,\nabla h\rangle=\sum_j x_j\langle \overline{\nabla}\phi,\nabla N_j\rangle+\sum_j N_j\langle \overline{\nabla}\phi,\nabla x_j\rangle,
\end{equation}
 and
\begin{equation}\label{a2}
\begin{split}
\sum_j N_j\langle \overline{\nabla}\phi,\nabla x_j\rangle&=\sum_jN_j\langle\overline{\nabla}\phi,\a_j-N_jN \rangle\\
&=\sum_j N_j\langle \overline{\nabla}\phi,\a_j\rangle-(\sum_j N_j^2)\langle \overline{\nabla}\phi,N\rangle\\
&=0.
\end{split}
\end{equation}
 Combining \eqref{a0}, \eqref{a1} and \eqref{a2}, we obtain \eqref{h3}.
\end{proof}

The first result is obtained for $\phi$-minimal radial graphs. 
 
 \begin{theorem}\label{t2}
  Let $e^\phi$ be a radial density, with $\phi=\phi(r)$,  $r=\lvert x\rvert^2$. Let $\Sigma$ be a $\phi$-minimal  radial graph. If $\phi'(r)+ \phi''(r)h^2\geq 0$ on $\Sigma$, then   $\Sigma$ is strongly $\phi$-stable.  
\end{theorem}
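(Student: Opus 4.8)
The plan is to follow the scheme of the proof of Theorem~\ref{t1}, replacing the non-vanishing function $N_{n+1}$ by the support function $h=\langle N,x\rangle$, which by hypothesis keeps a constant sign on a radial graph. The whole argument rests on first rewriting $L_\phi[h]$ as a sign-definite multiple of $h$, after which the stability estimate is essentially the same divergence-theorem manipulation used in \eqref{l4}--\eqref{l5}.

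To produce that identity I would start from the preceding lemma, i.e.\ from \eqref{h3}, and pass from $\Delta_\phi h+|A|^2h$ to $L_\phi[h]=\Delta_\phi h+(|A|^2-\overline{\nabla}^2\phi(N,N))h$. Here $\phi$-minimality enters in an essential way: since $H_\phi=0$, equation \eqref{h} gives $nH=\langle\overline{\nabla}\phi,N\rangle=2\phi'(r)h$, so the term $-nH$ on the right of \eqref{h3}, which has no analogue in \eqref{n1}, becomes itself a multiple of $h$. This is precisely why for radial graphs one must assume $\phi$-minimality rather than mere $\phi$-stationarity: a nonzero constant $\lambda=H_\phi$ would contribute a term not proportional to $h$ and would break the scheme. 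Using $\overline{\nabla}\phi=2\phi'(r)x$ together with the expression \eqref{he} for $\overline{\nabla}^2\phi(N,N)$, the remaining terms should collapse and yield $L_\phi[h]=c\,h$ for an explicit function $c$, whose sign of $-c$ is controlled exactly by the hypothesis of the theorem.

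Granting such an identity, the estimate is routine. For arbitrary $u\in C_0^\infty(\Sigma)$ set $v=u/h\in C_0^\infty(\Sigma)$, so that $u=vh$, and expand $L_\phi[u]=v\,L_\phi[h]+h\,\Delta_\phi v+2\langle\nabla v,\nabla h\rangle$. Then
\[
Q_\phi[u]=-\int_\Sigma\left(v^2h\,L_\phi[h]+vh^2\Delta_\phi v+2vh\langle\nabla v,\nabla h\rangle\right)dA_\phi.
\]
Applying the $\phi$-divergence theorem to the compactly supported field $vh^2\nabla v$, whose $\phi$-divergence is $vh^2\Delta_\phi v+h^2|\nabla v|^2+2vh\langle\nabla h,\nabla v\rangle$, converts the last two terms into $-\int_\Sigma h^2|\nabla v|^2\,dA_\phi$, leaving
\[
Q_\phi[u]=\int_\Sigma h^2|\nabla v|^2\,dA_\phi-\int_\Sigma v^2h\,L_\phi[h]\,dA_\phi.
\]
Substituting $L_\phi[h]=c\,h$ turns the second integrand into $-c\,v^2h^2$, and both terms are nonnegative as soon as $-c\ge 0$, that is under the stated hypothesis, giving $Q_\phi[u]\ge 0$.

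I expect the main obstacle to be the algebraic reduction of the second paragraph, namely checking that $L_\phi[h]$ is genuinely a scalar multiple of $h$. The delicate point is the sum $\sum_{i}\langle\d_{e_i}\overline{\nabla}\phi,N\rangle\langle e_i,x\rangle$ on the right of \eqref{h3}. Since $\d_{e_i}\overline{\nabla}\phi=4\phi''(r)\langle x,e_i\rangle x+2\phi'(r)e_i$ and $e_i\perp N$, this sum equals $4\phi''(r)h\sum_i\langle x,e_i\rangle^2=4\phi''(r)h\,|x^\top|^2$, where $x^\top$ is the tangential part of $x$ and $|x^\top|^2=r-h^2$. One must verify that this contribution combines correctly with $-nH=-2\phi'(r)h$ and with $-\overline{\nabla}^2\phi(N,N)h$ to leave a clean multiple of $h$; keeping careful track of the tangential component $|x^\top|^2=r-h^2$ here is the crux of the computation and the step where the precise form of the stability condition is pinned down.
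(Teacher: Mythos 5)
Your scheme is exactly the paper's: rewrite $L_\phi[h]$ as a multiple of $h$ starting from \eqref{h3} (using $nH=2\phi'(r)h+\lambda$ and \eqref{he}), substitute $u=vh$, and integrate the field $vh^2\nabla v$ by parts via the $\phi$-divergence theorem; your final formula for $Q_\phi[u]$ coincides with the paper's \eqref{h4} line by line. One small correction of emphasis: $\phi$-minimality is not as structurally essential as you claim. The paper runs the whole computation with general $H_\phi=\lambda$, obtaining the extra term $\lambda\int_\Sigma hv^2\,dA_\phi$ in \eqref{h4}; $\lambda=0$ merely kills it, and Corollary \ref{c2} salvages stability for $\lambda\neq 0$ when $\lambda$ and $h$ have the same sign. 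So a nonzero $\lambda$ does not break the scheme, it just adds a term whose sign must be controlled separately.

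The step you single out as the crux is indeed the crux, and your computation of it is correct where the paper's is not. One has $\d_{e_i}\overline{\nabla}\phi=4\phi''(r)\langle x,e_i\rangle x+2\phi'(r)e_i$, hence, as you say,
$\sum_i\langle\d_{e_i}\overline{\nabla}\phi,N\rangle\langle e_i,x\rangle=4\phi''(r)h\,\lvert x^\top\rvert^2=4\phi''(r)h(r-h^2)$.
The paper, however, passes from \eqref{h3} to \eqref{f1} by writing $L_\phi[h]=-nH-\overline{\nabla}^2\phi(N,N)h$, i.e.\ it silently sets this sum to zero; the same slip occurs in the proof of Theorem \ref{t1}, where $\langle\d_{e_i}\overline{\nabla}\phi,N\rangle=2\phi'(r)\langle e_i,N\rangle=0$ is asserted, which is valid only when $\phi''=0$. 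Completing your reduction honestly gives
\begin{equation*}
L_\phi[h]=-\bigl(2\phi'(r)h+\lambda\bigr)-4\phi''(r)h(r-h^2)-2\bigl(\phi'(r)+2\phi''(r)h^2\bigr)h
=-4h\bigl(\phi'(r)+r\,\phi''(r)\bigr)-\lambda,
\end{equation*}
so $L_\phi[h]$ is indeed a clean multiple of $h$ (plus $-\lambda$), but with coefficient $-4(\phi'+r\phi'')$ rather than the paper's $-4(\phi'+\phi''h^2)$. Your argument therefore closes under the hypothesis $\phi'(r)+r\,\phi''(r)\geq 0$ on $\Sigma$, not under the stated one: since $r-h^2=\lvert x^\top\rvert^2\geq 0$, the stated hypothesis $\phi'+\phi''h^2\geq 0$ implies the needed sign when $\phi''\geq 0$ (in particular for increasing convex $\phi$, and for expanders, where $\phi''=0$, so Corollary \ref{ce2} is unaffected), but for $\phi''<0$ it does not by itself give nonnegativity of the second integral. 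In short: your proposal is the paper's proof carried out carefully, and the verification you flag as delicate in fact exposes a dropped term in the paper's identity \eqref{f1}; finish the algebra and state the hypothesis as $\phi'+r\phi''\geq 0$ (or add $\phi''\geq 0$ to the stated one) and the proof is complete.
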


\begin{proof}
First consider the general case $H_\phi=\lambda$,  where $\lambda\in\r$ is a constant. 
Since $\phi$ is radial, from  \eqref{h}, \eqref{he} and \eqref{h3}, we obtain
\begin{equation}\label{f1}
L_\phi[h]=-nH-\nabla^2\phi(N,N) h=  -4h(\phi'(r) +\phi'' (r)h^2)-\lambda.
\end{equation}
   As in the proof of Theorem \ref{t1},  let  $u\in C_0^\infty(\Sigma)$ arbitrary and let $v=u/h\in C_0^\infty(\Sigma)$. For $u=vh$, a similar computation yields
\begin{equation}\label{f2}
L_\phi[u] =v\cdot L_\phi[h]+h\Delta_\phi v+2\langle\nabla v,\nabla h\rangle.
\end{equation}
Analogously as in \eqref{l5}, using \eqref{f1} and \eqref{f2} we have
\begin{equation}\label{h4} 
\begin{split} 
Q_\phi[u]&=-\int_\Sigma \left( v^2  h \cdot L_\phi[h]+vh^2\Delta_\phi v+2v h  \langle\nabla v,\nabla h\rangle\right) \, d A_\phi\\
&=\int_\Sigma h^2\lvert \nabla v\rvert^2\, d A_\phi-\int_\Sigma   h v^2\cdot L_\phi [h] \, d A_\phi\\
&=\int_\Sigma h^2 \lvert \nabla v\rvert^2\, d A_\phi+4\int_\Sigma h^2v^2(\phi'(r)+\phi''(r) h^2) \, d A_\phi+\lambda\int_\Sigma hv^2\, dA_\phi\\
&=\int_\Sigma h^2\lvert \nabla v\rvert^2\, d A_\phi+4\int_\Sigma h^2v^2(\phi'(r)+\phi''(r) h^2) \, d A_\phi\geq 0,
\end{split}
\end{equation}
where we have used that $\lambda=0$ in the last identity.
\end{proof}

\begin{corollary} \label{ce2} Radial graph expanders   are strongly $\phi$-stable.  
\end{corollary}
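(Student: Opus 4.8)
The plan is to obtain Corollary \ref{ce2} as an immediate application of Theorem \ref{t2}, in complete parallel to the way Corollary \ref{ce1} was deduced from Theorem \ref{t1}. The only work required is to verify that the hypothesis of Theorem \ref{t2} holds for the expander density and that an expander genuinely falls under the scope of that theorem.

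First I would record the density. For an expander we have $\phi(x)=|x|^2/4$, so in the variable $r=|x|^2$ this reads $\phi(r)=r/4$. Differentiating, $\phi'(r)=1/4$ and $\phi''(r)=0$. Hence on any radial graph
\begin{equation*}
\phi'(r)+\phi''(r)h^2=\frac{1}{4}\geq 0,
\end{equation*}
so the curvature condition required in Theorem \ref{t2} is satisfied pointwise, independently of the geometry of $\Sigma$.

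The one point deserving attention is that Theorem \ref{t2} is stated for $\phi$-\emph{minimal} radial graphs, not merely $\phi$-stationary ones; indeed its proof invokes $\lambda=0$ in the final line of \eqref{h4}. I would therefore note explicitly that an expander is $\phi$-minimal for the density $\phi(r)=r/4$: by \eqref{h} its weighted mean curvature is $H_\phi=nH-\tfrac{1}{2}\langle N,x\rangle$, and the defining soliton equation of an expander is precisely $H_\phi=0$. Thus an expander realized as a radial graph satisfies both hypotheses of Theorem \ref{t2}, and the theorem yields strong $\phi$-stability at once.

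I expect no genuine obstacle here; the content of the corollary lies entirely in Theorem \ref{t2}, and the verification above is a one-line computation together with the observation that expanders are $\phi$-minimal. The only pitfall to avoid is carelessly applying the general $H_\phi=\lambda$ computation in \eqref{f1}, which would leave the residual term $\lambda\int_\Sigma h v^2\,dA_\phi$ of indefinite sign; this is harmless precisely because $\lambda=0$ for expanders.

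\begin{proof}
For an expander the density is $\phi(x)=|x|^2/4$, that is, $\phi(r)=r/4$ with $r=|x|^2$. Then $\phi'(r)=1/4$ and $\phi''(r)=0$, so
$$\phi'(r)+\phi''(r)h^2=\frac{1}{4}\geq 0$$
on $\Sigma$. Moreover, by \eqref{h} the weighted mean curvature of an expander is $H_\phi=nH-\tfrac12\langle N,x\rangle$, which vanishes identically by the expander equation; hence an expander is a $\phi$-minimal hypersurface. Applying Theorem \ref{t2}, we conclude that radial graph expanders are strongly $\phi$-stable.
\end{proof}
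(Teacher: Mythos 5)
Your proof is correct and matches the paper's (implicit) argument exactly: Corollary \ref{ce2} follows from Theorem \ref{t2} by computing $\phi'(r)+\phi''(r)h^2=1/4\geq 0$ for $\phi(r)=r/4$ and noting that expanders satisfy $H_\phi=0$, hence are $\phi$-minimal. Your explicit remark that the $\phi$-minimality hypothesis (i.e.\ $\lambda=0$) is genuinely needed, since the general case would leave the sign-indefinite term $\lambda\int_\Sigma hv^2\,dA_\phi$, is a careful touch the paper leaves tacit.
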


As in Remark \ref{rem1}, monotonically increasing convex functions   $\phi$    satisfy  $\phi'(r)+\phi''(r)h^2\geq 0$ on $\Sigma$.

If $H_\phi=\lambda$ is a non-zero constant, the sign of the integral $\lambda\int_\Sigma h v^2\, dA_\phi$ in \eqref{l4} is nonnegative if  $\lambda$ and $h$ have the same sign. It follows from \eqref{h4} the following corollary.  

\begin{corollary}\label{c2}
Let $e^\phi$ be a radial density, with $\phi=\phi(r)$,  $r=\lvert x\rvert^2$. Suppose $\Sigma$ is a  $\phi$-stationary  radial graph  such that  $H_\phi$ and $h$ have the same sign.  If $\phi'(r)+ \phi''(r)h^2\geq 0$, then $\Sigma$ is strongly $\phi$-stable. 
\end{corollary}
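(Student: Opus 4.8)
The plan is to reuse almost verbatim the computation carried out in the proof of Theorem \ref{t2}, exploiting the fact that that computation was valid for a general constant $\lambda$ and that only its very last equality specialized to $\lambda=0$. First I would recall that on a radial graph the support function $h=\langle N,x\rangle$ does not vanish, so that for an arbitrary $u\in C_0^\infty(\Sigma)$ the function $v=u/h$ again lies in $C_0^\infty(\Sigma)$ and the substitution $u=vh$ is legitimate on all of $\Sigma$.

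Next I would run the same integration-by-parts argument as in \eqref{h4}. Starting from the expansion \eqref{f2}, namely $L_\phi[u]=v\cdot L_\phi[h]+h\Delta_\phi v+2\langle\nabla v,\nabla h\rangle$, inserting the formula \eqref{f1} for $L_\phi[h]$ (which, for $H_\phi=\lambda$, reads $L_\phi[h]=-4h(\phi'(r)+\phi''(r)h^2)-\lambda$), and applying the $\phi$-divergence theorem to the vector field $vh^2\nabla v$, I arrive at the three-term decomposition
\begin{equation*}
Q_\phi[u]=\int_\Sigma h^2\lvert\nabla v\rvert^2\,dA_\phi+4\int_\Sigma h^2v^2\big(\phi'(r)+\phi''(r)h^2\big)\,dA_\phi+\lambda\int_\Sigma h v^2\,dA_\phi.
\end{equation*}

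It then remains only to check that each of the three integrals is nonnegative. The first is manifestly so, and the second is nonnegative by the hypothesis $\phi'(r)+\phi''(r)h^2\geq 0$, exactly as in Theorem \ref{t2}. The genuinely new term is the cross term $\lambda\int_\Sigma h v^2\,dA_\phi$, and this is precisely where the assumption that $H_\phi=\lambda$ and $h$ share the same sign enters: it guarantees $\lambda h\geq 0$, so the integrand $\lambda h v^2$ is nonnegative pointwise and the integral is $\geq 0$. Hence $Q_\phi[u]\geq 0$ for every $u\in C_0^\infty(\Sigma)$, which is strong $\phi$-stability. I do not expect any real obstacle here beyond this sign bookkeeping: all the analytic content was already expended in Theorem \ref{t2}, and the corollary merely relaxes the condition $\lambda=0$ to the weaker sign-compatibility condition $\lambda h\geq 0$, under which the previously vanishing term becomes a nonnegative contribution rather than being discarded.
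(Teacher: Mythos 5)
Your proposal is correct and follows exactly the paper's own argument: the paper derives Corollary \ref{c2} by keeping the general-$\lambda$ computation of \eqref{h4} from the proof of Theorem \ref{t2} and noting that the extra term $\lambda\int_\Sigma h v^2\, dA_\phi$ is nonnegative precisely when $H_\phi=\lambda$ and $h$ share the same sign. Your sign bookkeeping, including the observation that only the final equality of Theorem \ref{t2} used $\lambda=0$, matches the paper's reasoning verbatim.
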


\begin{remark} If the radial density is constant, $\phi$-stationary hypersurfaces are cmc hypersurfaces. Theorem \ref{t1} for cmc vertical graphs is well known \cite{fc}. However, as far as the author knows, Theorem \ref{t2} and Corollary \ref{c2}  have not been explicitly stated   in the literature of cmc hypersurfaces. The first results of existence of cmc radial graphs appeared in \cite{ra} and \cite[\S 23]{se}: see also \cite{cal,fu,lo0}.
\end{remark}

In the last part of this section we study densities depending only on a coordinate of $\r^{n+1}$. Let $\phi=\phi(t)$,   $t=x_{n+1}$.  Then 
\begin{equation}\label{s1}
\overline{\nabla}\phi=\phi'(t)\a_{n+1},\qquad \overline{\nabla}^2\phi(N,N)=\phi''(t)N_{n+1}^2.
\end{equation}
 If $\Sigma$ is $\phi$-stationary, then \eqref{n1} writes as
$$\Delta_\phi N_{n+1}+\lvert A\rvert^2 N_{n+1} =-\phi''(t)N_{n+1}\sum_{i}\langle e_i,\a_{n+1}\rangle^2=-\phi''(t)N_{n+1}(1-N_{n+1}^2).$$
Combining with the expression of the Bakry-\'Emery Ricci tensor in \eqref{s1}, we have
\begin{equation}\label{ff}
L_\phi[N_{n+1}]=-\phi'' (t)N_{n+1}.
\end{equation}

\begin{theorem} \label{t4}
Suppose $\phi=\phi(x_{n+1})$. If $\phi$ is convex, then $\phi$-stationary vertical  graphs are   strongly $\phi$-stable.  
\end{theorem}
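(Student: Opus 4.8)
The plan is to imitate the argument of Theorem \ref{t1} almost verbatim, using the key identity \eqref{ff} in place of \eqref{lu}. Since $\Sigma$ is a vertical graph on a domain of the hyperplane $x_{n+1}=0$, the function $N_{n+1}=\langle N,\a_{n+1}\rangle$ has a sign on $\Sigma$ and never vanishes. Hence, for an arbitrary $u\in C_0^\infty(\Sigma)$, the quotient $v=u/N_{n+1}$ again lies in $C_0^\infty(\Sigma)$, and writing $u=vN_{n+1}$ produces exactly the decomposition
$$L_\phi[u]=v\cdot L_\phi[N_{n+1}]+N_{n+1}\Delta_\phi v+2\langle\nabla v,\nabla N_{n+1}\rangle,$$
which is the analogue of \eqref{l4} with $N_{n+1}$ in the role it already played there.

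Next I would substitute this into the quadratic form \eqref{qq} and apply the $\phi$-divergence theorem to the field $vN_{n+1}^2\nabla v$, precisely as in the passage from \eqref{q1} to \eqref{l5}. Because $v$ is compactly supported, the boundary contribution vanishes and the cross terms cancel, leaving
$$Q_\phi[u]=\int_\Sigma N_{n+1}^2\lvert\nabla v\rvert^2\,dA_\phi-\int_\Sigma v^2N_{n+1}\cdot L_\phi[N_{n+1}]\,dA_\phi.$$
Now I would insert the identity \eqref{ff}, namely $L_\phi[N_{n+1}]=-\phi''(x_{n+1})N_{n+1}$, to obtain
$$Q_\phi[u]=\int_\Sigma N_{n+1}^2\lvert\nabla v\rvert^2\,dA_\phi+\int_\Sigma v^2N_{n+1}^2\,\phi''(x_{n+1})\,dA_\phi.$$

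Finally, the convexity hypothesis enters in a single step: $\phi$ convex means $\phi''(x_{n+1})\geq 0$, so both integrands are nonnegative and $Q_\phi[u]\geq 0$ for every $u\in C_0^\infty(\Sigma)$, which is exactly strong $\phi$-stability. I do not expect any genuine obstacle here, since the heavy computation was already carried out in deriving \eqref{ff}; the substitution trick and the integration by parts are identical to those in Theorem \ref{t1}, and the only point requiring care is that $v=u/N_{n+1}$ is a legitimate test function, which is guaranteed precisely by the graph condition that $N_{n+1}$ does not vanish.
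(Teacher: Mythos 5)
Your proposal is correct and follows exactly the paper's own argument: the paper's proof of Theorem \ref{t4} likewise sets $v=u/N_{n+1}$, invokes the machinery of Theorem \ref{t1} (the decomposition \eqref{l4} and the integration by parts via the $\phi$-divergence theorem), and substitutes \eqref{ff} to conclude $Q_\phi[u]=\int_\Sigma N_{n+1}^2\lvert\nabla v\rvert^2\,dA_\phi+\int_\Sigma \phi''(t)\,v^2N_{n+1}^2\,dA_\phi\geq 0$. You merely spell out the intermediate steps that the paper compresses into ``the proof is similar as in Theorem \ref{t1}.''
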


\begin{proof} 
The proof is similar as in Theorem \ref{t1}. If    $u\in C_0^\infty(\Sigma)$ is arbitrary, let $v=u/N_{n+1}$. Using \eqref{ff} and the convexity of $\phi$,   we have
\begin{equation*}
\begin{split}
Q_\phi[u]&=  \int_\Sigma N_{n+1}^2\lvert \nabla v\rvert^2\, d A_\phi-\int_\Sigma  v^2N_{n+1}\cdot L_\phi[N_{n+1}] \, d A_\phi\\
&=\int_\Sigma N_{n+1}^2\lvert \nabla v\rvert^2\, d A_\phi+\int_\Sigma  \phi''(t) v^2N_{n+1}^2  \, d A_\phi\\
&\geq 0.
\end{split}
\end{equation*}
\end{proof}
 
 Two interesting densities under the conditions of Theorem \ref{t4} are the following. 
  
   \begin{corollary}  If $\alpha\leq 0$, $\alpha$-singular minimal vertical graphs are   strongly $\phi$-stable.  
\end{corollary}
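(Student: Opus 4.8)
The plan is simply to verify that the density of an $\alpha$-singular minimal hypersurface satisfies the convexity hypothesis of Theorem \ref{t4}, after which the conclusion follows immediately. Recall from the introduction that $\alpha$-singular minimal hypersurfaces correspond to the density $\phi(t)=\alpha\log(t)$ with $t=x_{n+1}$, where $\Sigma$ is assumed to lie in the halfspace $\r^{n+1}_{+}=\{x\in\r^{n+1}:x_{n+1}>0\}$, and that such a hypersurface is $\phi$-stationary by definition. Since Theorem \ref{t4} applies to any $\phi$-stationary vertical graph whose density $\phi=\phi(x_{n+1})$ is convex, it suffices to check convexity of this particular $\phi$.

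First I would compute the derivatives $\phi'(t)=\alpha/t$ and $\phi''(t)=-\alpha/t^2$. The decisive observation is that on $\r^{n+1}_{+}$ we have $t>0$, hence $t^2>0$, so the sign of $\phi''(t)$ is opposite to the sign of $\alpha$. Consequently, when $\alpha\leq 0$ we obtain $\phi''(t)=-\alpha/t^2\geq 0$ throughout the halfspace, which is precisely the condition that $\phi$ be convex. Invoking Theorem \ref{t4} then yields that $\alpha$-singular minimal vertical graphs are strongly $\phi$-stable.

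There is no genuine obstacle here: the argument reduces to a one-line sign check combined with the already-established Theorem \ref{t4}. The only point worth flagging is the interplay of signs. The logarithm contributes a \emph{negative} second derivative, so convexity is attained exactly when $\alpha$ is nonpositive; this is why the relevant hypothesis is $\alpha\leq 0$ rather than $\alpha\geq 0$. One should also keep in mind that the restriction $\Sigma\subset\r^{n+1}_{+}$ is what guarantees $t>0$, and hence that $\phi''$ is well defined and of constant sign on all of $\Sigma$.
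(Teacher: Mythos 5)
Your proof is correct and matches the paper's intended argument exactly: the corollary is stated as an immediate consequence of Theorem \ref{t4}, and the required convexity check $\phi''(t)=-\alpha/t^2\geq 0$ for $\alpha\leq 0$ on $\r^{n+1}_{+}$ is precisely the computation the paper itself records (in the discussion following Theorem \ref{estable1}). Your additional remark that the restriction $\Sigma\subset\r^{n+1}_{+}$ guarantees $t>0$, so that $\phi''$ is well defined and of constant sign, is a correct and appropriate point of care.
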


  \begin{corollary} $\lambda$-translators  vertical graphs are   strongly $\phi$-stable.  
\end{corollary}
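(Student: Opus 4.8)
The plan is to recognize this as the direct specialization of Theorem~\ref{t4} to the density determined by $\phi(t)=t$, where $t=x_{n+1}$. As recalled in the introduction, $\lambda$-translators correspond exactly to this choice of $\phi$, for which the weighted mean curvature reduces to $H_\phi=nH-N_{n+1}$. Accordingly, the whole argument rests on checking a single hypothesis and then invoking the theorem already proved.

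First I would verify the convexity condition required by Theorem~\ref{t4}. Since $\phi(t)=t$ is affine, one has $\phi''(t)=0$, so $\phi$ is (trivially) convex. With the convexity assumption met, Theorem~\ref{t4} immediately yields that every $\phi$-stationary vertical graph is strongly $\phi$-stable, which is precisely the assertion to be proved.

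Alternatively, and perhaps more transparently, I would track what becomes of the stability estimate when $\phi''\equiv 0$. From \eqref{ff}, the vanishing of $\phi''$ gives $L_\phi[N_{n+1}]=0$, so the function $N_{n+1}$ lies in the kernel of the $\phi$-Jacobi operator. Running the substitution $v=u/N_{n+1}$ exactly as in the proof of Theorem~\ref{t4}, the Bakry-\'Emery term drops out entirely and the quadratic form collapses to
\begin{equation*}
Q_\phi[u]=\int_\Sigma N_{n+1}^2\lvert\nabla v\rvert^2\, d A_\phi\geq 0,
\end{equation*}
which is manifestly nonnegative for every $u\in C_0^\infty(\Sigma)$.

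There is essentially no obstacle here: the only point requiring attention is the admissibility of the substitution $v=u/N_{n+1}$, which is justified by the defining property of a vertical graph, namely that $N_{n+1}=\langle N,\a_{n+1}\rangle$ does not vanish on $\Sigma$. This guarantees $v\in C_0^\infty(\Sigma)$ whenever $u\in C_0^\infty(\Sigma)$, so the computation is legitimate and the conclusion follows.
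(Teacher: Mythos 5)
Your proposal is correct and coincides with the paper's argument: the corollary is exactly Theorem~\ref{t4} applied to $\phi(t)=t$, whose convexity is immediate from $\phi''\equiv 0$. Your additional remark that \eqref{ff} gives $L_\phi[N_{n+1}]=0$, so the quadratic form reduces to $\int_\Sigma N_{n+1}^2\lvert\nabla v\rvert^2\, dA_\phi\geq 0$, is simply the specialization of the theorem's proof and adds no new route.
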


Notice that this result    extends the particular case of translating solitons proved in \cite{sh}.

 We now investigate the  version of Theorem \ref{t4} for radial graphs. From \eqref{s1}, we have $nH=\phi'(t)N_{n+1}+\lambda$, where $\lambda$ is a constant. Now \eqref{h3} is
 \begin{equation*} 
 \begin{split}
 \Delta_\phi h+\lvert A\rvert^2h&=-\phi'(t) N_{n+1}-\lambda-\phi''(t)N_{n+1}\sum_{ij}\langle e_i,\a_{n+1}\rangle\langle e_i,\a_j\rangle\langle x,\a_j\rangle\\
 &=-\phi' (t)N_{n+1}-\lambda-\phi''(t)N_{n+1}(x_{n+1}-hN_{n+1}).
 \end{split}
 \end{equation*}
 This identity together \eqref{s1} yields
 \begin{equation}\label{ff2}
 L_\phi[h]=-\phi'(t) N_{n+1}-\lambda-\phi''(t)N_{n+1}x_{n+1}.
 \end{equation}
Thanks to \eqref{ff2},  the   expression of $Q_\phi[u]$ in  \eqref{h4} yields
 \begin{equation}\label{ff3}
 \begin{split}
 Q_\phi[u]& = \int_\Sigma h^2 \lvert \nabla v\rvert^2\, d A_\phi-\int_\Sigma   h v^2\cdot L_\phi [h] \, d A_\phi\\
 &= \int_\Sigma h^2\lvert \nabla v\rvert^2\, d A_\phi+\lambda \int_\Sigma   h v^2  \, d A_\phi+\int_\Sigma   h v^2N_{n+1}(\phi'(t)+x_{n+1}\phi''(t))  \, d A_\phi.\\
 \end{split}
 \end{equation}
 
 Although the last integral is a bit cumbersome for a general density $\phi=\phi(x_{n+1})$, we have an interesting result for $\alpha$-singular minimal surfaces. 
 
  \begin{theorem}\label{t5}
  Independently of the sign of $\alpha$,     $\alpha$-singular minimal radial graphs such that  $H_\phi$ and $h$ have the same sign are strongly $\phi$-stable.  In particular, this includes the case of $\phi$-minimal radial graphs.  
\end{theorem}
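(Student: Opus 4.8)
The plan is to specialize the general radial-graph computation that culminates in \eqref{ff3} to the singular minimal density $\phi(t)=\alpha\log(t)$, and to exploit a cancellation that is peculiar to the logarithm. First I would record the relevant derivatives: with $t=x_{n+1}$ one has $\phi'(t)=\alpha/t$ and $\phi''(t)=-\alpha/t^2$, so that
$$\phi'(t)+x_{n+1}\phi''(t)=\frac{\alpha}{x_{n+1}}-\frac{\alpha}{x_{n+1}}=0.$$
This is the decisive observation: the coefficient multiplying the ``cumbersome'' last integrand in \eqref{ff3} vanishes identically on $\Sigma$, and it does so regardless of the sign of $\alpha$. This is precisely why the singular minimal case is tractable where a general $\phi=\phi(x_{n+1})$ is not.

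Next I would substitute this into \eqref{ff3}. With the third integral gone, the quadratic form reduces to
$$Q_\phi[u]=\int_\Sigma h^2\lvert\nabla v\rvert^2\, dA_\phi+\lambda\int_\Sigma hv^2\, dA_\phi,$$
where $v=u/h\in C_0^\infty(\Sigma)$ and $\lambda=H_\phi$ is the constant weighted mean curvature. The first term is manifestly nonnegative. For the second, I would invoke the hypothesis that $H_\phi$ and $h$ share the same sign: then $\lambda h\geq 0$ pointwise on $\Sigma$, whence $\lambda h v^2\geq 0$ and the second integral is nonnegative as well. Therefore $Q_\phi[u]\geq 0$ for every $u\in C_0^\infty(\Sigma)$, which is exactly strong $\phi$-stability. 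The $\phi$-minimal case is then the specialization $\lambda=0$: the sign hypothesis becomes vacuous, the second integral disappears entirely, and nonnegativity follows from the gradient term alone.

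I do not anticipate a genuine obstacle here, since the analytic machinery is already in place. The only point worth checking is that the substitution $v=u/h$ is legitimate, i.e.\ that $h=\langle N,x\rangle$ is nowhere zero on a radial graph, so that $v\in C_0^\infty(\Sigma)$; this was recorded at the start of the section. Everything else—the expression \eqref{ff3}, the underlying $\phi$-divergence integration by parts, and the formula \eqref{ff2} for $L_\phi[h]$—has been established before the statement, so the proof amounts to inserting the logarithmic density, observing the cancellation, and reading off the sign.
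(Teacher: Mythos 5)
Your proposal is correct and follows exactly the paper's own route: the cancellation $\phi'(t)+x_{n+1}\phi''(t)=0$ for $\phi(t)=\alpha\log t$ kills the last integral in \eqref{ff3}, and the sign hypothesis on $H_\phi$ and $h$ handles the $\lambda$-term. You merely spell out the details (including the legitimacy of $v=u/h$) that the paper leaves implicit in its one-line proof.
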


\begin{proof}
Since $\phi(t)=\alpha\log(t)$, then $\phi'(t)+x_{n+1}\phi''(t)=0$, and the result is now a consequence of  \eqref{ff3}. 
\end{proof}
 
  Notice that for $\lambda$-translators, no analogous results are derived from the above computations. In such a case, $nH=N_{n+1}+\lambda$ and $\phi(t)=t$. Then \eqref{ff3} is  
\begin{equation*}
  Q_\phi[u]  =
  \int_\Sigma h^2\lvert\nabla v\rvert^2\, d A_\phi+\lambda \int_\Sigma   h v^2  \, d A_\phi+  \int_\Sigma   h v^2N_{n+1}  \, d A_\phi.
  \end{equation*}
 For example, if $\lambda=0$, a sufficient condition to ensure the non-negativity of $Q_\phi[u]$ is that $hN_{n+1}\geq 0$. Let us observe that a change of orientation on $\Sigma$ does not affect to the sign of $hN_{n+1}$.

\section{Minimizers of vertical graphs in manifolds with some densities}\label{sec3}
 
In this section, and for special types of densities, we prove that vertical graphs  are minimizers in a suitable class of hypersurfaces.  Results of  weighted minimizers were obtained   in \cite{ca,h1} for $\phi$-minimal hypersurfaces and our results are inspired in them.  In these papers,   the authors used the language of calibrations based in the theory of minimal hypersurfaces \cite{mo1,mo2}. Since in the present paper, the manifold is $\r^{n+1}$ endowed with a  density, the arguments can be directly done without any reference to  calibrations.

Let $\Sigma$ be a compact  $\phi$-stationary graph on a domain $U$ of the hyperplane $\Pi$ of equation $x_{n+1}=0$ which we identify with $\r^n$. Consider the notation $x=(x_1,\ldots,x_{n+1})= (q,x_{n+1})$ for a generic point of $U\times\r$, where $q\in U$.  Suppose that $\Sigma$ is the graph of a function $f\in C^\infty(U)$, $f=f(x_1,...,x_n)$. Without loss of generality, we can assume that the Gauss map of $\Sigma$ is 
\begin{equation}\label{nnn}
N(q,f(q))= N(q)=\frac{1}{\sqrt{1+\lvert Df\rvert^2}}\left(-Df,1\right),
\end{equation}
where $Df\in\r^n$ is the Euclidean gradient of $f$. 

On the domain $U\times\r\subset\r^{n+1}$, define a vector field $X$ by translations of $N $ along the $x_{n+1}$-axis and multiplied  by  the density $e^\phi$. To be precise, let
$$X(x)=X(x_1,\ldots,x_{n+1})=e^{\phi(x)} N (q,f(q)).$$
Using \eqref{h}, we have 
\begin{equation}\label{d1}
\begin{split}
\mathrm{div}_{\r^{n+1}}(X)(x)&=-e^{\phi(x)}\mathrm{div}_{\r^n}\left(\frac{Df}{\sqrt{1+\lvert Df\rvert^2}}\right)(q)+e^{\phi(x)}\langle\overline{\nabla}\phi(x),N(q)\rangle\\
&=-nH(q,f(q)) e^{\phi(x)}+e^{\phi(x)}\langle\overline{\nabla}\phi(x),N(q)\rangle\\
&=e^{\phi(x)}\left(-\lambda- \langle\overline{\nabla}\phi(q,f(q)),N(q)\rangle+\langle\overline{\nabla}\phi(x),N(q)\rangle\right)\\
&=e^{\phi(x)}\left(  \langle\overline{\nabla}\phi(x)-\overline{\nabla}\phi(q,f(q)),N(q)\rangle -\lambda\right).
\end{split}
\end{equation}

Under some hypothesis on   $\phi$, we prove that $\Sigma$ minimizes the weighted area $A_\phi$ in a suitable class of hypersurfaces. This class of hypersurfaces     is  defined by
\begin{equation*}
\begin{split}
\mathcal{S}(\Sigma;U)=&\Big\{\widetilde{\Sigma}\hookrightarrow  \r^{n+1} \mbox{hypersurface}: \widetilde{\Sigma}\subset U\times\r,\ \partial\widetilde{\Sigma}=\partial\Sigma,\\
&\mbox{$\widetilde{\Sigma}$ and $\Sigma$ are in the same homology class,}  \\
& \mbox{$\widetilde{\Sigma}$ encloses the same weighted volume than $\Sigma$}\Big\}.
\end{split}
\end{equation*}
To simplify, we will say that $\Sigma$ is a weighted minimizer  in $\mathcal{S}(\Sigma;U)$.  
 The last condition about the weighted volume is natural because we are assuming that $\Sigma$ has constant weighted mean curvature. 

\begin{theorem}\label{estable1} Suppose that $e^\phi$ is a density depending only on $x_{n+1}$ and  that $\phi$ is a convex function. If  $\Sigma$ is a compact $\phi$-stationary vertical graph  on a domain $U$ of the   hyperplane $\Pi$ of equation $x_{n+1}=0$, then  $\Sigma$ is a weighted minimizer    in $\mathcal{S}(\Sigma;U)$.
\end{theorem}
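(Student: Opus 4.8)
The plan is to treat the vector field $X$ from \eqref{d1} as a calibration and to extract the sign of its divergence from the convexity of $\phi$. First I would record two pointwise properties of $X$. Since $\lvert N(q)\rvert=1$ and $\phi$ depends only on $x_{n+1}$, the expression $X(x)=e^{\phi(x)}N(q)$ from \eqref{nnn} gives $\lvert X(x)\rvert=e^{\phi(x)}$ at every point of $U\times\r$; and on $\Sigma$ itself one has $X=e^\phi N$, so $\langle X,N\rangle=e^\phi$ there and consequently $\int_\Sigma\langle X,N\rangle\,dA=A_\phi(\Sigma)$.

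Next, for an arbitrary competitor $\widetilde{\Sigma}\in\mathcal{S}(\Sigma;U)$ with unit normal $\widetilde{N}$, Cauchy--Schwarz gives $\langle X,\widetilde{N}\rangle\leq\lvert X\rvert=e^\phi$, whence $A_\phi(\widetilde{\Sigma})=\int_{\widetilde{\Sigma}}e^\phi\,dA\geq\int_{\widetilde{\Sigma}}\langle X,\widetilde{N}\rangle\,dA$. Because $\partial\widetilde{\Sigma}=\partial\Sigma$ and the two hypersurfaces lie in the same homology class of $U\times\r$, the chain $\widetilde{\Sigma}-\Sigma$ bounds an oriented region $\Omega$, and the divergence theorem yields $\int_{\widetilde{\Sigma}}\langle X,\widetilde{N}\rangle\,dA-\int_\Sigma\langle X,N\rangle\,dA=\int_\Omega\mathrm{div}_{\r^{n+1}}X\,dV$. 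Combining these, $A_\phi(\widetilde{\Sigma})\geq A_\phi(\Sigma)+\int_\Omega\mathrm{div}\,X\,dV$, so it suffices to show the last integral is nonnegative.

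For this I would use \eqref{d1} together with $\overline{\nabla}\phi=\phi'\,\a_{n+1}$, which gives $\mathrm{div}\,X(q,s)=e^{\phi(s)}\big((\phi'(s)-\phi'(f(q)))N_{n+1}(q)-\lambda\big)$ with $s=x_{n+1}$. The term $-\lambda e^{\phi}$ integrates over $\Omega$ to $-\lambda$ times the signed weighted volume of $\Omega$, which vanishes precisely because $\widetilde{\Sigma}$ encloses the same weighted volume as $\Sigma$; this is exactly where the volume constraint built into $\mathcal{S}(\Sigma;U)$ enters. For the remaining part, convexity of $\phi$ makes $\phi'$ nondecreasing, and since $N_{n+1}(q)=1/\sqrt{1+\lvert Df\rvert^2}>0$, the integrand $e^{\phi(s)}(\phi'(s)-\phi'(f(q)))N_{n+1}(q)$ is $\geq 0$ above the graph ($s>f(q)$) and $\leq 0$ below it ($s<f(q)$). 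Equivalently, the vertical potential $\psi(q,s)=\int_{f(q)}^{s}e^{\phi(\tau)}(\phi'(\tau)-\phi'(f(q)))\,d\tau$ satisfies $\psi\geq 0$ with equality on $\Sigma$, because $\partial_s\psi$ has the sign of $s-f(q)$. Slicing $\Omega$ by the vertical lines $\{q\}\times\r$ and integrating in $s$ then forces $\int_\Omega\mathrm{div}\,X\,dV\geq 0$, completing the argument.

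The main obstacle is this final step for competitors $\widetilde{\Sigma}$ that are not themselves graphs: one must use the homology hypothesis to realize $\widetilde{\Sigma}-\Sigma$ as the boundary of an oriented region $\Omega$, and then track orientations and multiplicities along each vertical slice so that the sign with which $\Omega$ is weighted matches the sign of $\mathrm{div}\,X$ coming from $\psi\geq 0$. For graph competitors the slice is simply the interval between $f(q)$ and the competitor's height, and nonnegativity is immediate from $\psi\geq 0$; the general case is the technical heart, and it also requires justifying the weighted-volume cancellation of the $\lambda$-term as a genuine signed integral over $\Omega$.
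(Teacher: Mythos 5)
Your argument is correct and is essentially the paper's own proof: the same calibration field $X=e^{\phi}N$, the divergence computation \eqref{d1}, the divergence theorem applied over the homology chain $\Omega$, cancellation of the $\lambda$-term via the equal enclosed weighted volumes, and the sign of $\phi'(x_{n+1})-\phi'(f(q))$ extracted from the convexity of $\phi$ (your opposite orientation convention for $\Omega$ merely flips the desired inequality from the paper's $\leq 0$ to your $\geq 0$). The orientation/multiplicity bookkeeping you defer as the ``technical heart'' is settled in the paper in a few lines, with no vertical slicing needed: on each positively oriented component of the chain $\Omega$ (where $N$ points outward) one necessarily has $x_{n+1}\leq f(q)$, and the reverse on negatively oriented components, which is exactly your claim that the signed weight of $\Omega$ matches the sign of the potential $\psi$.
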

\begin{proof}
Let   $\widetilde{\Sigma}$ be a   hypersurface in $\mathcal{S}(\Sigma;U)$ and we will prove that $A_\phi(\Sigma)\leq A_\phi(\widetilde{\Sigma})$. Since $\Sigma$ and $\widetilde{\Sigma}$ belong to the same homology class, both hypersurfaces determine an oriented $3$-chain $\Omega$ whose boundary is $\Sigma\cup\widetilde{\Sigma}$. Let $\widetilde{N}$ be the orientation on $\widetilde{\Sigma}$ compatible with the orientation of the $3$-chain $\Omega$.

Since the function $\phi$ depends only on the   $x_{n+1}$-coordinate, then $\overline{\nabla}\phi=\phi'(t)\a_{n+1}$. We can follow the computation of \eqref{d1} obtaining from \eqref{nnn}
$$\mathrm{div}_{\r^{n+1}}(X)(x)  =e^{\phi(x)}\left(\frac{\phi'(x_{n+1})-\phi'(f(q)) }{\sqrt{1+\lvert Df\rvert^2}}-\lambda \right).$$
The divergence theorem yields
\begin{equation}\label{e3}
\begin{split}
  \int_\Omega  \left( \frac{\phi'(x_{n+1})-\phi'(f(q)) }{\sqrt{1+\lvert Df\rvert^2}}-\lambda \right)e^\phi\, dV &= \int_\Sigma\langle X,N\rangle\, dA+\int_{\widetilde{\Sigma}}\langle X,\widetilde{N}\rangle\, d\widetilde{A}\\
&=\int_\Sigma e^\phi\, dA+\int_{\widetilde{\Sigma}} e^\phi\langle N,\widetilde{N}\rangle\, d\widetilde{A} \\
&\geq  \int_\Sigma e^\phi\, dA-\int_{\widetilde{\Sigma}}e^\phi\, d\widetilde{A}\\
&=A_\phi(\Sigma)-A_\phi(\widetilde{\Sigma})
\end{split}
\end{equation}
because $X=e^\phi N$ in $\Sigma$ and $\langle N,\widetilde{N}\rangle\leq 1$.  Since the enclosed weighted volume of $\Sigma$ and $\widetilde{\Sigma}$ is the same, we find $\int_\Omega e^\phi\, dV=\int_\Omega dV_\phi=0$. Thus
\begin{equation*}
A_\phi(\Sigma)-A_\phi(\widetilde{\Sigma})\leq  \int_\Omega \frac{\phi'(x_{n+1})-\phi'(f(q)) }{\sqrt{1+\lvert Df\rvert^2}}e^\phi\, dV.
  \end{equation*}
  The proof is achieved if we prove that $\phi'(x_{n+1})-\phi'(f(q))\leq 0$ in $\Omega$.    The volume element $dV$ in $\Omega$ is $dV=dx_1\wedge\ldots\wedge dx_{n+1}$ on each positively oriented component of the chain $\Omega$, that is, on each component for which $N$ points out of the component. On the other hand, $dV=- dx_1\wedge\ldots\wedge dx_{n+1}$ on each negatively oriented component. Since $\widetilde{\Sigma}$ is included in the cylinder $U\times\r$, we have $f(q)\geq x_{n+1}$ in the positively oriented components and $f(q)\leq x_{n+1}$ in the negatively oriented ones. Because $\phi$ is convex, we have that $\phi'$ is monotically increasing and we deduce that  $\phi'(x_{n+1})\leq \phi'(f(q))$ in the positively oriented components and $\phi'(x_{n+1})\geq \phi'(f(q))$ in the negatively oriented ones. This concludes  the proof.
\end{proof}

Examples of densities under the hypothesis of Theorem \ref{estable1} are $e^\phi=x_{n+1}^\alpha$ when $\alpha\leq 0$. In such a case, 
$\phi''(t)=-\alpha/t^2\geq 0$. See \cite{lo}.

\begin{corollary} If $\alpha\leq 0$,  then any compact $\alpha$-singular minimal vertical graph   on a domain $U$ of the   hyperplane $\Pi$ of equation $x_{n+1}=0$ is weighted minimizer  in $\mathcal{S}(\Sigma;U)$.  
\end{corollary}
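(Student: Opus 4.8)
The plan is to obtain this as an immediate consequence of Theorem \ref{estable1} by checking that its two hypotheses hold for the density attached to $\alpha$-singular minimal hypersurfaces. Recall from the introduction that such hypersurfaces correspond to $\phi(t)=\alpha\log(t)$, $t=x_{n+1}$, so that $e^\phi=x_{n+1}^\alpha$. This is manifestly a density depending only on the coordinate $x_{n+1}$, which is the first requirement of Theorem \ref{estable1}. Moreover, an $\alpha$-singular minimal hypersurface is by definition $\phi$-stationary for this $\phi$, so the graph $\Sigma$ in the statement already lies in the class of hypersurfaces to which Theorem \ref{estable1} applies.

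The only substantive point is to verify convexity of $\phi$. I would compute $\phi'(t)=\alpha/t$ and $\phi''(t)=-\alpha/t^2$. Since the hypersurface is assumed to be contained in the halfspace $\r^{n+1}_{+}=\{x_{n+1}>0\}$, we have $t>0$ and hence $t^2>0$; therefore $\phi''(t)=-\alpha/t^2\geq 0$ precisely when $\alpha\leq 0$. Thus the hypothesis $\alpha\leq 0$ is exactly the condition guaranteeing that $\phi$ is convex on the domain where it is defined, matching the observation recorded just before the corollary that $\phi''(t)=-\alpha/t^2\geq 0$.

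With both hypotheses of Theorem \ref{estable1} verified, density depending only on $x_{n+1}$ and convexity of $\phi$, the conclusion follows directly: any compact $\alpha$-singular minimal vertical graph on a domain $U$ of the hyperplane $x_{n+1}=0$ is a weighted minimizer in $\mathcal{S}(\Sigma;U)$. There is essentially no obstacle beyond this verification, since the calibration-type argument producing the inequality $A_\phi(\Sigma)\leq A_\phi(\widetilde\Sigma)$ has already been carried out in Theorem \ref{estable1}. The single thing to keep in mind is the interplay between the sign condition $\alpha\leq 0$, the convexity requirement, and the restriction to the halfspace where $\log(x_{n+1})$ is well defined.
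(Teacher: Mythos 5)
Your proposal is correct and coincides with the paper's own justification: the paper obtains this corollary exactly as you do, by noting that $e^\phi=x_{n+1}^\alpha$ depends only on the coordinate $x_{n+1}$ and that $\phi''(t)=-\alpha/t^2\geq 0$ for $\alpha\leq 0$ on the halfspace $x_{n+1}>0$, so Theorem \ref{estable1} applies directly. Nothing further is needed.
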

When $\alpha=0$, the statement is the known result of the theory of constant mean curvature hypersurfaces. 

Another examples of densities correspond with $\lambda$-translators because $\phi(t)=t$ and $\phi''=0$. Then we have the following result proved in \cite{h1}.

\begin{corollary} Compact $\lambda$-translators which are graphs on a domain $U$ of the   hyperplane $\Pi$ of equation $x_{n+1}=0$ are weighted minimizer  in $\mathcal{S}(\Sigma;U)$.  
\end{corollary}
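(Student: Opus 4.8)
The plan is to treat the vector field $X(x)=e^{\phi(x)}N(q,f(q))$ constructed above as a calibration for the weighted area and to compare $\Sigma$ with an arbitrary competitor by integrating $\mathrm{div}(X)$ over the region they enclose. Given $\widetilde{\Sigma}\in\mathcal{S}(\Sigma;U)$, the hypothesis that $\widetilde{\Sigma}$ is homologous to $\Sigma$ with $\partial\widetilde{\Sigma}=\partial\Sigma$ lets me take an oriented chain $\Omega\subset U\times\r$ with boundary $\Sigma\cup\widetilde{\Sigma}$; I would orient $\widetilde{\Sigma}$ by the normal $\widetilde{N}$ induced by $\Omega$ and apply the Euclidean divergence theorem to $X$ on $\Omega$.

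I would carry out the argument in three steps. First, specialize the divergence identity \eqref{d1} to $\phi=\phi(x_{n+1})$: since $\overline{\nabla}\phi=\phi'(x_{n+1})\a_{n+1}$ and $N_{n+1}(q)=1/\sqrt{1+|Df|^2}$ by \eqref{nnn}, one gets $\mathrm{div}(X)=e^{\phi}\big((\phi'(x_{n+1})-\phi'(f(q)))/\sqrt{1+|Df|^2}-\lambda\big)$. Second, evaluate the flux through $\partial\Omega$: on $\Sigma$ one has $X=e^\phi N$, so that boundary term is exactly $A_\phi(\Sigma)$, while on $\widetilde{\Sigma}$ the Cauchy--Schwarz bound $\langle N,\widetilde{N}\rangle\le1$ gives a contribution at most $A_\phi(\widetilde{\Sigma})$; hence the total flux is $\ge A_\phi(\Sigma)-A_\phi(\widetilde{\Sigma})$. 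Third, remove the constant term: because $\widetilde{\Sigma}$ encloses the same weighted volume as $\Sigma$, the signed weighted volume of $\Omega$ vanishes, $\int_\Omega e^\phi\,dV=0$, so the $-\lambda$ summand drops out. Combining the three steps leaves $A_\phi(\Sigma)-A_\phi(\widetilde{\Sigma})\le\int_\Omega\frac{\phi'(x_{n+1})-\phi'(f(q))}{\sqrt{1+|Df|^2}}\,e^\phi\,dV$, and everything reduces to showing this integral is nonpositive.

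The hard part will be this final sign analysis, which is the only place convexity is used and where orientations must be handled with care. I would split $\Omega$ into components according to whether $N$ points out of or into $\Omega$. On a positively oriented component the graph lies above, so $x_{n+1}\le f(q)$, and on a negatively oriented one $x_{n+1}\ge f(q)$; at the same time the volume form equals $\pm\,dx_1\wedge\cdots\wedge dx_{n+1}$ according to the same sign. Convexity of $\phi$ makes $\phi'$ nondecreasing, so $\phi'(x_{n+1})-\phi'(f(q))$ has the sign of $x_{n+1}-f(q)$, and this sign is always cancelled by the sign of $dV$; thus the integrand is nonpositive on every component and the integral is $\le0$. This gives $A_\phi(\Sigma)\le A_\phi(\widetilde{\Sigma})$. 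I expect the delicate bookkeeping to be the correlation between the orientation of each chain component and the position of $\widetilde{\Sigma}$ relative to the graph, since it is precisely this matching that turns convexity of $\phi$ into the desired minimizing inequality.
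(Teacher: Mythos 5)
Your argument is correct and is essentially the paper's own proof of Theorem \ref{estable1}, of which this corollary is just the special case $\phi(t)=t$ (convex, with $\phi''=0$); the paper deduces the corollary by citing that theorem rather than rerunning the calibration argument. One remark: the ``hard part'' you anticipate actually trivializes here, since for a $\lambda$-translator $\phi'\equiv 1$, so $\phi'(x_{n+1})-\phi'(f(q))$ vanishes identically and the orientation bookkeeping on the components of $\Omega$ is not needed at all.
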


Notice that for radial densities, identity \eqref{d1} is difficult to manage. However, in the case of expanders, it follows the next result.

\begin{theorem}\label{estable2} Compact   graph expanders   on a domain $U$ of the   hyperplane $\Pi$ of equation $x_{n+1}=0$ are weighted minimizers in $\mathcal{S}(\Sigma;U)$.
\end{theorem}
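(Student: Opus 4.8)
The plan is to reuse, verbatim, the calibration vector field from the proof of Theorem~\ref{estable1}, namely
$$X(x)=e^{\phi(x)}\,N(q,f(q)),$$
the Gauss map \eqref{nnn} of $\Sigma$ translated along the $x_{n+1}$-axis and rescaled by the density. The feature that rescues the expander case, in contrast to a general radial density, is that here $\phi(x)=\lvert x\rvert^2/4$ has the \emph{linear} gradient $\overline{\nabla}\phi(x)=x/2$. Consequently, in the divergence formula \eqref{d1} the difference $\overline{\nabla}\phi(x)-\overline{\nabla}\phi(q,f(q))=\tfrac12\bigl(x_{n+1}-f(q)\bigr)\a_{n+1}$ has its horizontal components cancel, leaving a purely vertical term whose sign is controlled by $x_{n+1}-f(q)$.

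First I would compute the divergence. Since graph expanders are $\phi$-minimal, we have $\lambda=0$, and substituting the above into \eqref{d1} together with \eqref{nnn} gives
$$\mathrm{div}_{\r^{n+1}}(X)(x)=e^{\phi(x)}\,\frac{x_{n+1}-f(q)}{2\sqrt{1+\lvert Df\rvert^2}}.$$
This is structurally identical to the formula obtained in the proof of Theorem~\ref{estable1}, with $\tfrac12\bigl(x_{n+1}-f(q)\bigr)$ playing the role of $\phi'(x_{n+1})-\phi'(f(q))$.

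Next I would run the divergence-theorem estimate exactly as in \eqref{e3}. Given $\widetilde{\Sigma}\in\mathcal{S}(\Sigma;U)$, let $\Omega$ be the oriented chain with boundary $\Sigma\cup\widetilde{\Sigma}$ constructed as in Theorem~\ref{estable1}, and $\widetilde{N}$ the compatible orientation. Because $X=e^\phi N$ on $\Sigma$ and $\langle N,\widetilde{N}\rangle\leq 1$ on $\widetilde{\Sigma}$, the divergence theorem yields
$$A_\phi(\Sigma)-A_\phi(\widetilde{\Sigma})\leq\int_\Omega \mathrm{div}_{\r^{n+1}}(X)\,dV=\int_\Omega e^{\phi}\,\frac{x_{n+1}-f(q)}{2\sqrt{1+\lvert Df\rvert^2}}\,dV.$$
Finally, I would settle the sign of this integral by the same component analysis used in Theorem~\ref{estable1}: since $\widetilde{\Sigma}\subset U\times\r$, on each positively oriented component (where $dV=dx_1\wedge\cdots\wedge dx_{n+1}$) one has $f(q)\geq x_{n+1}$, while on each negatively oriented component (where $dV=-dx_1\wedge\cdots\wedge dx_{n+1}$) one has $f(q)\leq x_{n+1}$. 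In both cases the integrand contributes nonpositively, so the right-hand side is $\leq 0$ and hence $A_\phi(\Sigma)\leq A_\phi(\widetilde{\Sigma})$, as required.

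The main obstacle is exactly the one flagged in the remark preceding the statement: for a general radial density the difference $\overline{\nabla}\phi(x)-\overline{\nabla}\phi(q,f(q))$ in \eqref{d1} does not reduce to a purely vertical term and cannot be controlled by the sign of $x_{n+1}-f(q)$ alone. The whole argument hinges on the quadratic form of $\phi$ for expanders, which linearizes $\overline{\nabla}\phi$ and restores the monotonicity structure exploited in Theorem~\ref{estable1}. I note also that, since $\lambda=0$ here, the weighted-volume constraint defining $\mathcal{S}(\Sigma;U)$ is not needed to absorb a $\lambda$-term, in contrast to the proof of Theorem~\ref{estable1}.
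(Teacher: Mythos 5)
Your proof is correct and follows essentially the same route as the paper: the same calibration field $X=e^\phi N$, the same computation via \eqref{d1} showing that $\overline{\nabla}\phi(x)-\overline{\nabla}\phi(q,f(q))=\tfrac12(x_{n+1}-f(q))\a_{n+1}$ for $\phi=\lvert x\rvert^2/4$, and the same divergence-theorem estimate with the component-wise sign analysis of $x_{n+1}-f(q)$ borrowed from Theorem~\ref{estable1}. The only cosmetic difference is that you set $\lambda=0$ at the outset (legitimate, since expanders are $\phi$-minimal), which indeed renders the weighted-volume constraint in $\mathcal{S}(\Sigma;U)$ superfluous, whereas the paper keeps the $-\lambda$ term and absorbs it with that constraint exactly as in Theorem~\ref{estable1}.
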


\begin{proof} For expanders, $\phi(\lvert x \rvert^2)=|x|^2/4$ and $\overline{\nabla}\phi=x/2$. Hence 
$$\langle \overline{\nabla}\phi(x)-\overline{\nabla}\phi(q,f(q)),N(q)\rangle=\frac12\langle (x_{n+1}-f(q))\a_{n+1},N(q)\rangle= \frac{x_{n+1}-f(q)}{2\sqrt{1+|Df|^2}}.$$
From \eqref{d1}, we have 
$$\mathrm{div}_{\r^{n+1}}(X)(x) =e^{\phi(x)}\left( \frac{x_{n+1}-f(q) }{2\sqrt{1+\lvert Df\rvert^2}}-\lambda \right).$$
We now follow the same steps   of   the proof of Theorem \ref{estable1}. The divergence theorem gives
\begin{equation*}
A_\phi(\Sigma)-A_\phi(\widetilde{\Sigma})\leq  \int_\Omega \frac{x_{n+1}-f(q) }{2\sqrt{1+\lvert Df\rvert^2}}e^\phi\, dV.
  \end{equation*}
  Finally, we also have $x_{n+1}-f(q)\leq 0$ in $\Omega$. This proves $A_\phi(\Sigma)- A_\phi(\widetilde{\Sigma})\leq 0$, obtaining the result.
\end{proof}

Following with the same ideas, we revisit a result proved in \cite{ca,h1} for $\phi$-minimal hypersurfaces.

\begin{theorem}\label{t-estable3}  Suppose that  the density $e^\phi$ depends only on the variables $(x_1,\ldots,x_{n})$.  If $\Sigma$ is a compact $\phi$-stationary graph  on a domain $U$ of the hyperplane $\Pi$ of equation $x_{n+1}=0$, then $\Sigma$ is a weighted minimizer in $\mathcal{S}(\Sigma;U)$
\end{theorem}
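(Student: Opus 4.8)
The plan is to reuse the calibration-type vector field
$$X(x)=e^{\phi(x)}N(q,f(q))$$
introduced just before \eqref{d1}, together with the divergence identity computed there, and to exploit a cancellation that occurs precisely because $\phi$ does not depend on the graphing direction $x_{n+1}$. Following the scheme of Theorems \ref{estable1} and \ref{estable2}, I would fix $\widetilde\Sigma\in\mathcal{S}(\Sigma;U)$, let $\Omega$ be the oriented chain bounded by $\Sigma\cup\widetilde\Sigma$ with $\widetilde N$ the compatible orientation on $\widetilde\Sigma$, and compare the weighted areas by applying the divergence theorem to $X$ on $\Omega$.

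The decisive step is the evaluation of $\mathrm{div}_{\r^{n+1}}(X)$. By \eqref{d1},
$$\mathrm{div}_{\r^{n+1}}(X)(x)=e^{\phi(x)}\left(\langle\overline{\nabla}\phi(x)-\overline{\nabla}\phi(q,f(q)),N(q)\rangle-\lambda\right).$$
Writing $x=(q,x_{n+1})$ with $q=(x_1,\ldots,x_n)$, the hypothesis $\phi=\phi(x_1,\ldots,x_n)$ forces $\overline{\nabla}\phi$ to depend only on the first $n$ coordinates. Since the ambient point $x$ and the graph point $(q,f(q))$ share the same horizontal projection $q$, we obtain $\overline{\nabla}\phi(x)=\overline{\nabla}\phi(q,f(q))$, so the inner-product term vanishes identically on $\Omega$ and
$$\mathrm{div}_{\r^{n+1}}(X)=-\lambda\, e^{\phi}.$$

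With this the conclusion follows exactly as in the proof of Theorem \ref{estable1}. The divergence theorem, carried out verbatim as in \eqref{e3}, yields
$$-\lambda\int_\Omega e^\phi\, dV\ \ge\ A_\phi(\Sigma)-A_\phi(\widetilde\Sigma),$$
where one uses that $X=e^\phi N$ on $\Sigma$ and that $\langle N,\widetilde N\rangle\ge-1$ on $\widetilde\Sigma$. Because $\Sigma$ and $\widetilde\Sigma$ enclose the same weighted volume, the left-hand side equals $-\lambda\int_\Omega dV_\phi=0$, and therefore $A_\phi(\Sigma)\le A_\phi(\widetilde\Sigma)$, which is the asserted minimizing property.

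I expect no genuine obstacle here; this is in fact the cleanest of the three minimizing results in the section. In Theorems \ref{estable1} and \ref{estable2} the analogue of the inner-product term survives, and one must control its sign on $\Omega$ by decomposing $\Omega$ into its positively and negatively oriented components and invoking convexity of $\phi$. Here that residual term does not merely have a favorable sign but vanishes outright, so neither a convexity hypothesis nor any case analysis on the components of $\Omega$ is needed. The single point deserving care is the verification that the two gradient evaluations coincide, which rests entirely on $\phi$ being independent of the direction $x_{n+1}$ along which the field $X$ is constructed.
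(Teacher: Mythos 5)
Your proposal is correct and follows essentially the same route as the paper: the same calibration field $X=e^{\phi}N$, the observation that $\overline{\nabla}\phi(x)-\overline{\nabla}\phi(q,f(q))=0$ because $\phi$ is independent of $x_{n+1}$, so \eqref{d1} collapses to $\mathrm{div}_{\r^{n+1}}(X)=-\lambda e^{\phi}$, and then the divergence-theorem comparison of Theorem \ref{estable1} with the equal-weighted-volume condition killing the $\lambda$-term. Your write-up is in fact slightly more careful than the paper's one-line proof (you correctly invoke $\langle N,\widetilde{N}\rangle\geq -1$, where the paper's \eqref{e3} cites $\langle N,\widetilde{N}\rangle\leq 1$), but the argument is identical in substance.
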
 

\begin{proof} Now $\overline{\nabla}\phi(x)-\overline{\nabla}\phi(q,f(q))=0$. Identity \eqref{d1} reduces into
\begin{equation}
\mathrm{div}_{\r^{n+1}}(X)(x) =-\lambda e^{\phi(x)},
\end{equation}
and the result is now proved. \end{proof}

\section*{Acknowledgements}    
The author  has been partially supported by MINECO/MICINN/FEDER grant no. PID2023-150727NB-I00,  and by the ``Mar\'{\i}a de Maeztu'' Excellence Unit IMAG, reference CEX2020-001105- M, funded by MCINN/AEI/10.13039/ 501100011033/ CEX2020-001105-M.

\bibliographystyle{amsplain}

\end{document}